\documentclass[11pt]{amsart}

\usepackage{graphicx}
\usepackage[USenglish,english]{babel}
\usepackage[T1]{fontenc}
\usepackage[latin1]{inputenc}
\usepackage{amsfonts}
\usepackage{amssymb}
\usepackage{amsthm}
\usepackage{amsmath}
\usepackage{tikz}
\usetikzlibrary{decorations.pathreplacing}
\usepackage{float}
\usepackage{enumerate}
\usepackage{accents}
\usepackage{mathtools}
\usepackage{mathrsfs}
\usepackage{comment}
\usepackage{afterpage}
\usepackage{bbm}
\usepackage{dsfont}
\usepackage{stmaryrd}
\usepackage{hyperref}
\usepackage{mleftright}
\usepackage{subfig}
\usepackage{faktor}

\theoremstyle{plain}
\newtheorem{thm}{Theorem}[section]
\newtheorem{lem}[thm]{Lemma}
\newtheorem{prop}[thm]{Proposition}

\newtheorem*{thm*}{Main Theorem}
\newtheorem*{prop*}{Proposition}
\newtheorem*{cor*}{Corollary}
\newtheorem{thmintro}{Theorem}

\theoremstyle{definition}
\newtheorem{mydef}[thm]{Definition}

\newtheorem{rem}[thm]{Remark}
\newtheorem*{quest*}{Question}
\newtheorem{claim}{Claim}

\newtheorem{thmdef}[thm]{Theorem/Definition}

\DeclareMathOperator{\Ima}{Im}
\DeclareMathOperator{\diam}{diam}

\title{The Hausdorff- and Nagata-dimension of M\"obius spaces}

\author{Merlin Incerti-Medici}
\address{Institut f\"ur Mathematik, Universit\"at Z\"urich, Switzerland}
\email{merlin.incerti-medici@math.uzh.ch}

\begin{document}

\maketitle

\begin{abstract}
We study cross ratios from an axiomatic viewpoint and show that a space equipped with a cross ratios carries several notions of dimension. Specifically, we introduce notions of Hausdorff- and Nagata-dimension and prove that they are invariants of M\"obius spaces. This provides us with more direct methods of obtaining dimensions for boundaries of Gromov-hyperbolic spaces.
\end{abstract}

\tableofcontents




\section{Introduction} \label{sec:Introduction}

It is well known that for any $\mathrm{CAT(-1)}$ space $X$ we can define its boundary at infinity and equip it with a family of visual metrics $\{ \rho_x \}_{x \in X}$. It is also well known that these metrics are generally depending on the base point $x$. However, it is known that all these metrics belong to the same M\"obius structure, meaning that they all define the same cross-ratio-triple
\[ (w,x,y,z) \mapsto crt_{\rho}(wxyz) := (\rho(w,x)\rho(y,z) : \rho(w,y)\rho(x,z) : \rho(w,z)\rho(x,y) ) \in \mathbb{R}P^2.Ê\]

It turns out that M\"obius structures are a geometric structure in their own right that can exist without the aid of a metric. This has been studied in \cite{Buyalo} and \cite{IM17a}, the notion of M\"obius spaces having been introduced in the former. In fact, M\"obius spaces lie somewhere between metric spaces and topological spaces. They carry a natural topological structure, which is not necessarily metrizable. Thus, one may wonder which geometric notions that are known on metric spaces can be carried over to M\"obius spaces. In this paper, we will give a positive answer to this question in the case of the Hausdorff- and the Nagata-dimension, provided that the M\"obius structure is induced by a quasi-metric.\\

A quasi-metric on a set $Z$ is a map $\rho : Z \times Z \rightarrow \mathbb{R}$ that satisfies the same properties as a metric, except that the triangle-inequality is replaced by a weaker condition (see section \ref{sec:Preliminaries} for a precise definition). Furthermore, throughout the paper we will allow the possibility that the set $Z$ contains a point that `lies at infinity' with respect to $\rho$ (see section \ref{sec:Preliminaries}). Whenever $\rho$ has a point at infinity, we call $\rho$ an extended quasi-metric. Any (extended) quasi-metric induces a M\"obius structure via the formula above. If two quasi-metrics $\rho$, $\rho'$ induce the same M\"obius structure, we call them M\"obius equivalent. As we will see, any two M\"obius equivalent quasi-metrics can be obtained from each other through a finite number of rescalings and involutions (see section \ref{sec:Preliminaries}). Any quasi-metric notion that is invariant under rescaling and taking involution is therefore an invariant of the induced M\"obius structure.

There is an easy, naive way to generalise many notions from metric spaces to quasi-metric spaces, simply by replacing the metric by a quasi-metric in the definition. We will use this approach to generalise the notions of Hausdorff- and Nagata-dimension to quasi-metric spaces and prove that any two M\"obius equivalent quasi-metrics yield the same Hausdorff- and Nagata-dimension. In other words:

\begin{thmintro} \label{thmintro:HausdorffInvariance}
Let $(X,M)$ be a M\"obius space, $\rho$ a (possibly extended) quasi-metric inducing $M$. Let $\rho'$ be a rescaling or an involution of $\rho$.

Then $\dim_{Haus}(X,\rho) = \dim_{Haus}(X,\rho')$. In particular, we can define the Hausdorff-dimension of $(X,M)$ to be $\dim_{Haus}(X,M) := \dim_{Haus}(X,\rho)$. Furthermore, $\dim_{Haus}(X,\rho)$ is invariant under M\"obius equivalence.
\end{thmintro}

The Nagata-dimension has been introduced in a note by Assouad (cf. \cite{Assouad}). It is a varation of the asymptotic dimension and can be defined as follows:

\begin{mydef} \label{mydef:NagataDimension}
Let $(X,\rho)$ be a metric space and $\mathcal{B}$ a cover of $X$. The cover $\mathcal{B}$ is called {\it $C$-bounded} if for every $B \in \mathcal{B}$, diam$(B) \leq C$, i.\,e.\,the diameter of every set in the cover $\mathcal{B}$ is bounded by $C$.

Let $s > 0$, $m \in \mathbb{N}$. We say that a family of subsets $\mathcal{B} \subset \mathcal{P}(X)$ has {\it $s$-multiplicity $\leq m$} if for every set $U \in X$ with diam$(U) \leq s$, there are at most $m$ elements $B \in \mathcal{B}$ with $B \cap U \neq \emptyset$.

The Nagata-dimension $\dim_N(X,\rho)$ is defined to be the infimum of all $n$ such that there exists a constant $c > 0$ such that for all $s > 0$ there exists a $cs$-bounded cover of $X$ with $s$-multiplicity $\leq n+1$.
\end{mydef}

The class of metric spaces that have finite Nagata-dimension includes doubling spaces, metric trees, euclidean buildings, homogeneous or pinched negatively curved Hadamard manifolds and others (cf. \cite{LangSchlichenmaierNagata}). The Nagata-dimension is, however, not preserved by quasi-isometries.\\

As outlined earlier, we generalise the Nagata-dimension to quasi-metric spaces by replacing the metric in the definition by a quasi-metric. We will prove the following

\begin{thmintro} \label{thmintro:NagataInvariance}
Let $(X,\rho)$ be an (extended) quasi-metric space, $o \in X$. Let $\rho'$ be a rescaling or an involution of $\rho$. Then $\dim_N(X,\rho) = \dim_N(X,\rho')$.

In particular, given a M\"obius structure $M$ and any two quasi-metrics $\rho, \rho'$ that induce $M$, we have $\dim_N(X,\rho) = \dim_N(X,\rho')$. Thus we can define $\dim_N(X,M) := \dim_N(X,\rho)$ for any quasi-metric $\rho$ that induces $M$. Furthermore, $\dim_N(X,M)$ is invariant under M\"obius equivalence.
\end{thmintro}

These two theorems allow us to generalize the notions of Hausdorff- and Nagata-dimension from metric spaces to intrinsic M\"obius spaces. Even for boundaries of Gromov-hyperbolic spaces, which carry visual metrics $\rho_x$, this provides us with advantages. If $X$ is a $\delta$-hyperbolic space, then the construction of the visual metric $\rho_x$ relies on constructing a quasi-metric $\rho'_x = e^{-(\cdot \vert \cdot)_x}$ first, where $(Ê\cdot \vert \cdot)_x$ is the Gromov-product (see \cite{BS}, in particular section 3.3 for a systematic development of visual metrics). In order to obtain $\rho_x$, one has to take a sufficiently small power of $\rho'_x$ such that $\rho'^{\alpha}_x$ is bi-Lipschitz equivalent to a metric. Taking the exponential of a quasi-metric changes the Hausdorff-dimension though, as one can see when going through the definition. Indeed, taking the exponential corresponds to rescaling the interior metric space, changing the $\delta$ in the process. Therefore, developing notions of dimension for M\"obius spaces allows us to make finer distinctions between spaces.\\

The rest of the paper is organized as follows. In Section \ref{sec:Preliminaries}, we give precise definitions and review the basic properties of M\"obius structures needed. In Section \ref{sec:HausdorffDimension}, we will generalize the Hausdorff-dimension to quasi-metric spaces and prove Theorem \ref{thmintro:HausdorffInvariance}. We will use it as an illustrating example for our strategy. In Section \ref{sec:NagataDimension}, we will use the same strategy to prove Theorem \ref{thmintro:NagataInvariance}.\\

The results in section \ref{sec:NagataDimension} are generalizations of results that are known for metric spaces and are due to Lang-Schlichenmaier and Xiangdong respectively (cf.\,\cite{LangSchlichenmaierNagata} and \cite{XieXiangdong}). The upshot of the generalizations presented in this paper is that in considering not only metrics but also quasi-metrics, we can define the Hausdorff- and Nagata-dimension of intrinsic M\"obius spaces whose M\"obius structure is not induced by a metric.\\

{\bf Acknowledgments} The author is grateful to Viktor Schroeder for many inspiring discussions and helpful advice and to Urs Lang for helpful advice and bringing the results of Xie Xiangdong and Brodskiy, Dydak, Levin and Mitra to my attention.




\section{Preliminaries} \label{sec:Preliminaries}

Let $Z$ be a set, $\rho : Z \times Z \rightarrow \mathbb{R}$ a map and $K \geq 1$. We say that $\rho$ is a {\it semi-metric} if it is symmetric, non-negative and for all $z$, $z' \in Z$, we have that $\rho(z,z') = 0$ if and only if $z = z'$. We say that $\rho$ is a {\it $K$-quasi-metric} if it is a semi-metric and for all $x, y, z \in Z$, we have $\rho(x,z) \leq K \max(\rho(x,y), \rho(y,z))$. Generalizing the definition of a quasi-metric, we say that $\rho : Z \times Z \rightarrow [0, \infty]$ is an {\it extended $K$-quasi-metric} if there exists exactly one point $\omega \in Z$, such that for all $x \in Z \diagdown \{ \omega \}$, $\rho(x, \omega) = \infty$, $\rho( \omega, \omega) = 0$ and the restriction of $\rho$ to $Z \diagdown \{ \omega \} \times Z \diagdown \{ \omega \}$ is a $K$-quasi-metric. We call $\omega$ the {\it point at infinity} with respect to $\rho$. A motivating example for this notion is the Riemannian sphere, seen as the union $\mathbb{C} \cup \{ \infty \}$. Analogously, we define the notions of extended metrics and extended semi-metrics. Throughout this paper, we use the convention that `infinite distances cancel', i.e.\,if $\omega$ lies at infinity with respect to $\rho$, then for all $z$, $z' \in Z \setminus \{ \omega \}$, we have $\frac{ \rho(z, \omega) }{ \rho( z', \omega)} = 1$.

We call an $n$-tuple $(z_1, \dots, z_n) \in Z^n$ {\it non-degenerate} if and only if for all $i \neq j$, we have $z_i \neq z_j$.

To define M\"obius structures, we need the notion of admissible quadruples.
\begin{mydef}
A quadruple $(z_1, z_2, z_3, z_4) \in Z^4$ is admissible if there exists no triple $i \neq j \neq k \neq i$ such that $z_i = z_j = z_k$. We denote the set of admissible quadruples by $\mathcal{A}$.
\end{mydef}

Consider the ordered triple $((12)(34), (13)(42), (14)(23))$. The symmetric group of four elements $\mathcal{S}_4$ acts on this triple by permuting the numbers 1-4. Whenever $\sigma \in \mathcal{S}_4$ acts on the numbers, it induces a permutation on the triple. Define $\varphi(\sigma) \in \mathcal{S}_3$ to be the permutation on the triple induced by the action of $\sigma$. It is easy to check that $\varphi : \mathcal{S}_4 \rightarrow \mathcal{S}_3$ is a group homomorphism. One can interpret the expression $(12)(34)$ to denote two opposite edges of a tetrahedron whose corners are labeled by the numbers 1-4. In this interpretation, $\varphi$ is the group homomorphism that sends a permutation of the corners to the induced permutation of pairs of opposite edges.\\

A M\"obius structure will be a map that sends admissible quadruples to particular triples of real numbers, such that certain symmetries are satisfied. The image of a M\"obius structure will be contained in the set
\[Ê\overline{L_4} := \{ (x,y,z) \in \mathbb{R}^3 \vert x + y + z = 0 \} \cup \{ (0, \infty, -\infty), (-\infty, 0, \infty), (\infty, - \infty, 0) \}. \]

\begin{mydef} \label{def:GeneralizedMobiusStructure}

Let $Z$ be a set with at least three points. A map $M : \mathcal{A} \rightarrow \overline{L_4}$ is called a {\it M\"obius structure} if and only if it satisfies the following conditions:

\begin{enumerate}

\item[1)] For all $P \in \mathcal{A}$ and all $\pi \in \mathcal{S}_4$, we have

\[ M(\pi P) = sgn(\pi)\varphi(\pi)M(P). \]

\item[2)] For $P \in \mathcal{A}$, $M(P) \in L_4$ if and only if $P$ is non-degenerate.

\item[3)] For $P = (x,x,y,z)$, we have $M(P) = (0, \infty, -\infty)$.

\item[4)] Let $(x,y, \omega, \alpha, \beta)$ be an admissible $5$-tuple $(x, y, \omega, \alpha, \beta)$ such that $(\omega, \alpha, \beta)$ is a non-degenerate triple, $\alpha \neq x \neq \beta$ and $\alpha \neq y \neq \beta$. Then, there exists some $\lambda = \lambda(x,y,\omega,\alpha,\beta) \in \mathbb{R} \cup \{ \pm \infty \}$ such that

\[ M(\alpha x \omega \beta) + M(\alpha \omega y \beta) - M(\alpha x y \beta) = (\lambda, -\lambda, 0). \]

Moreover, when $(\omega, \alpha, \beta)$ is non-degenerate, $x \neq \beta$ and $y \neq \alpha$, the first component of the left-hand-side expression is well-defined. Analogously, the second component of the left-hand-side expression is well-defined when $(\omega, \alpha, \beta)$ is non-degenerate, $x \neq \alpha$ and $y \neq \beta$.

\end{enumerate}

The pair $(Z,M)$ is called a {\it M\"obius space}.

\end{mydef}

Given a M\"obius structure $M$, we write $M = (a,b,c)$, where $a$, $b$, $c$ are the components of $M$. When studying M\"obius structures, the following two maps, which are both equivalent to $M$, can be useful. We define the {\it cross ratio}
\begin{equation*}
\begin{split}
cr : \mathcal{A} & \rightarrow [0, \infty]\\
cr(w,x,y,z)& := e^{c(w,x,y,z)}.
\end{split}
\end{equation*}
Denoting
\[ \overline{\Delta} = \{ (\alpha : \beta : \gamma) \in \mathbb{R}P^2 \vert \alpha, \beta, \gamma > 0 \} \cup \{ (0:1:1) , (1:0:1), (1:1:0) \}, \]
\[ a := a(w,x,y,z) \qquad b := b(w,x,y,z) \qquad c := c(w,x,y,z), \]
we define the {\it cross ratio-triple}
\begin{equation*}
\begin{split}
crt : \mathcal{A} & \rightarrow \overline{\Delta}\\
crt(w,x,y,z) := & ( e^{ \frac{1}{3} (c - b) } : e^{ \frac{1}{3} (a - c)Ê} : e^{ \frac{1}{3} (b - a ) } ).
\end{split}
\end{equation*}

The map $M$ can be recovered from both $cr$ and $crt$ (note that $(a,b,c) \in \overline{L_4}$ and thus $a + b + c = 0$). Abusing notation, we will also refer to $(Z, cr)$ and $(Z, crt)$ as M\"obius spaces.

In \cite{Buyalo}, Buyalo showed that for every M\"obius structure $M$, there exists a (possibly extended) semi-metric $\rho$, such that for every admissible quadruple $(w,x,y,z)$, we have
\[ cr(w,x,y,z) = \frac{\rho(w,x) \rho(y,z)}{ \rho(w,y) \rho(x,z)}. \]
For this reason, M\"obius structures are also thought of as generalisations of cross ratios. Given a semi-metric $\rho$, we speak of the M\"obius structure or cross ratio induced by $\rho$. When two different semi-metrics $\rho$, $\rho'$ induce the same M\"obius structure, we call them M\"obius equivalent. A map $f : (Z, M) \rightarrow (Z', M')$ between two M\"obius spaces is called a {\it M\"obius map} if and only if for all admissible quadruples $(w,x,y,z)$ in $Z$, we have $M'(f(w),f(x),f(y),f(z)) = M(w,x,y,z)$. A M\"obius map that is bijective is called a {\it M\"obius equivalence}.

In the same work, Buyalo also showed that for every non-degenerate triple $(\omega, \alpha, \beta) \in Z^3$ in a M\"obius space $(Z,M)$, we can construct a semi-metric $\rho_A$ that induces $M$. Furthermore, for every semi-metric $\rho$ that induces $M$, the following formula holds:
\[ \rho_A(x,y) = \frac{ \rho(x,y) }{Ê\rho(x,\omega) \rho(\omega, y) } \frac{ \rho(\alpha, \omega) \rho(\omega, \beta)}{\rho(\alpha, \beta)}. \]
He used these semi-metrics to define a topology on $(Z, M)$ as follows. Given a triple $A$, we denote the `open ball' at $z$ of radius $r$ with respect to $\rho_A$ by
\[ B_{A,r}(z) := \{ z' \in Z \vert \rho_A(z,z') < r \}. \]
We call the topology generated by the family of open balls for all $A$, $z$, $r$ the M\"obius topology and denote it by $\mathcal{T}_M$. If there exists a metric $\rho$ such that $M$ is induced by $\rho$, then the metric topology $\mathcal{T}_{\rho}$ coincides with $\mathcal{T}_M$ (see \cite{IM17a}).\\

In the following sections, we will only consider M\"obius structures that are induced by quasi-metrics. These can be characterized in terms of the following property.

\begin{mydef} \label{thm:cornercondition}
Let $M : \mathcal{A} \rightarrow \overline{L_4}$ be a M\"obius structure and $crt : \mathcal{A} \rightarrow \overline{\Delta}$ its cross ratio-triple. We say that $M$ satisfies the {\it (corner)-condition} if and only if the closure of $\Ima(crt) \subset \mathbb{R}P^2$ does not contain any of the points $(0:1:1)$, $(1:0:1)$, $(1:1:0)$.

M\"obius structures that satisfy the (corner)-condition are called {\it strong M\"obius structures}.
\end{mydef}

The name of this condition becomes clear when we represent every point in $\overline{\Delta}$ by its unique representative $(\alpha : \beta : \gamma)$ such that $\alpha + \beta + \gamma = 1$. The set $\overline{\Delta}$ then becomes an open triangle in $\mathbb{R}^3$ together with the points $(1:0:0)$, $(0:1:0)$, $(0:0:1)$. The (corner)-condition requires that the image of $crt$ in this triangle does not have any of the corners of the triangle as an accumulation point. We have the following result.

\begin{thm}[\cite{IM17a}] \label{thm:cornercondition}
Let $(Z,M)$ be a M\"obius space. Then $M$ is induced by a (possibly extended) quasi-metric if and only if it satisfies the (corner)-condition. Furthermore, whenever $M$ is induced by a (possibly extended) quasi-metric, there exists a bounded quasi-metric inducing $M$.
\end{thm}

We end this section with a construction that allows us to construct different semi-metrics that induce the same M\"obius structure. Let $\rho$ be an extended semi-metric and let $o \in Z$ be a point such that for all $z \neq o$, $\rho(z,o) > 0$. We define the {\it involution of $\rho$ at $o$} by
\[ \rho_o(x,y) := \frac{ \rho(x,y)Ê}{ \rho(x,o) \rho(o,y) }. \]
Note that $o$ lies at infinity with respect to $\rho_o$ and, if $\omega$ is a point at infinity with respect to $\rho$, then
\[ \rho_o(x,\omega) = \frac{1}{\rho(x,o)}.Ê\]

If $\rho$ is an extended semi-metric, then $\rho_o$ is again an extended semi-metric. In \cite{BS}, Proposition 5.3.6, Buyalo-Schroeder prove that for any extended $K$-quasi-metric $\rho$, its involution $\rho_o$ is a $K'^2$-quasi-metric for some $K' \geq K$. A direct computation shows that $\rho$ and $\rho_o$ induce the same cross ratio. Furthermore, if $\rho$ has a point at infinity, denoted $\omega$, then the involution of $\rho_o$ at $\omega$ is again $\rho$, i.e.
\[ \rho(x,y) = \frac{\rho_o(x,y)}{\rho_o(x,\omega) \rho_o(\omega,y)}. \]




\section{The Hausdorff-dimension of strong M\"obius spaces} \label{sec:HausdorffDimension}

In this section, we generalise the notion of Hausdorff-dimension to strong M\"obius spaces. Consider a  strong M\"obius space $(Z,M)$ and let $\rho$ be a (possibly extended) $K$-quasi-metric that induces $M$. Let $A \subseteq Z$ be a subset, $s \geq 0$ and $\delta > 0$. We denote by $B_{\rho, r}(x) := \{ y \in Z \vert \rho(x,y) \leq r \}$ the closed ball of radius $r$ around $x$ with respect to $\rho$. Furthermore, we say a {\it $\delta$-cover} of $A$ is a cover of $A$ by closed balls $B_{\rho, r_i}(x_i)$ such that for all $i \in I$, $r_i \leq \delta$ (note that we do not require $x_i \in A$). To avoid clustered notation, we will often omit the index set $I$ and simply speak of the index $i$. We define
\[ \mu_{\delta,\rho}^s(A) := \inf \left( \sum_{i} r_i^s \Big|  \{ B_{\rho, r_i}(x_i) \}_{i} \text{ is a $\delta$-cover of $A\setminus \{ \infty \}$}  \right). \]

Fixing $A \subseteq Z$, $\mu_{\delta,\rho}^s(A)$ is increasing as $\delta$ goes to zero. We define the {\it $s$-dimensional Hausdorff measure} of $A$ to be the limit
\[ \mu_\rho^s(A) := \lim_{\delta \rightarrow 0}\mu_{\delta}^s(A) \in [0, \infty]. \]

By construction, we have that $\mu_\rho^s(\emptyset) = 0$ and $\mu_\rho^s$ is monotone and subadditive for countable unions of subsets. Thus, we have defined an outer measure on $Z$. This definition is completely analogous to the definition of the $s$-dimensional Hausdorff measure on a metric space, except that we have to exclude the point at infinity, if it belongs to $A$. In preparation to dealing with the point at infinity, we first show that adding or removing one point that does not lie at infinity from the set $A$, does not change $\mu_{\delta,\rho}^s(A)$.

Let $A \subseteq Z$ and $p \in Z \diagdown (A \cup \{ \infty \})$. Let $\{ B_{\rho, r_i}(x_i) \}_i$ be a $\delta$-cover of $A$. By adding one ball $B_{\delta}(p)$ of radius $\delta$, we get a cover of $A \cup \{ p \}$. When computing $\mu_{\delta,\rho}^s(A \cup \{ p \})$, this cover contributes with the expression
\[ \sum_{i} r_i^s + \delta^s. \]
Taking the infimum over all possible $\delta$-covers, we see that
\[ \mu_{\delta,\rho}^s(A) \leq \mu_{\delta,\rho}^s(A \cup \{ p \}) \leq \mu_{\delta,\rho}^s(A) + \delta^s. \]
Taking the limit $\delta \rightarrow 0$ yields
\[ \mu_\rho^s(A) = \mu_\rho^s(A \cup \{ p \}). \]
So we see that $\mu_\rho^s$ doesn't change when we add or remove a point in $Z$. We will later use this to add a point at infinity when necessary. We now prove the following lemma.

\begin{lem} \label{lem:CriticalValue}
Let $\rho$ be a quasi-metric on $Z$ and $s \geq 0$. Suppose, $\mu_\rho^s(Z) < \infty$. Then for all $t > s, \mu_\rho^t(Z) = 0$.
\end{lem}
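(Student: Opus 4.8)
The plan is to mimic the classical proof of the analogous statement for Hausdorff measures on metric spaces, which relies on a simple scaling comparison between $\mu^s_{\delta,d}$ and $\mu^t_{\delta,d}$ for $t > s$. The key observation is that inside a single $\delta$-cover, each radius $r_i$ satisfies $r_i \leq \delta$, so for the larger exponent $t = s + (t-s)$ we can factor out a uniform power of $\delta$: since $r_i^t = r_i^{t-s} r_i^s \leq \delta^{t-s} r_i^s$, summing over the cover gives $\sum_i r_i^t \leq \delta^{t-s} \sum_i r_i^s$.

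First I would fix an arbitrary $\delta$-cover $\{ B_{r_i}(x_i) \}_i$ of $X \setminus \{\infty\}$ and apply the inequality above to obtain $\sum_i r_i^t \leq \delta^{t-s} \sum_i r_i^s$. Taking the infimum over all such covers on the right-hand side (and noting the left-hand side is an upper bound for the infimum defining $\mu^t_{\delta,d}(X)$) yields
\[ \mu^t_{\delta,d}(X) \leq \delta^{t-s} \, \mu^s_{\delta,d}(X). \]
Here I must be slightly careful about whether the infimum is attained, but the argument goes through at the level of infima: for any cover realizing a value close to $\mu^s_{\delta,d}(X)$, the corresponding $t$-sum is bounded by $\delta^{t-s}$ times that value, so passing to the infimum preserves the inequality.

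Next I would let $\delta \to 0$. Since $\mu^s_d(X) < \infty$ by hypothesis and $\mu^s_{\delta,d}(X)$ increases to $\mu^s_d(X)$, the quantities $\mu^s_{\delta,d}(X)$ are bounded above by the finite constant $\mu^s_d(X)$ for all $\delta$. Because $t > s$, the factor $\delta^{t-s} \to 0$, and the product $\delta^{t-s} \mu^s_{\delta,d}(X) \leq \delta^{t-s} \mu^s_d(X) \to 0$. Hence $\mu^t_{\delta,d}(X) \to 0$, and taking the limit defining the Hausdorff measure gives $\mu^t_d(X) = 0$, as claimed.

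I do not expect any serious obstacle here: the statement is essentially the quasi-metric analogue of the classical fact, and the proof uses only the definition of $\mu^s_{\delta,d}$ together with the elementary bound $r_i \leq \delta$. The only point requiring attention is that the quasi-metric setting introduces the multiplicative constant $K$ in the triangle-type inequality, but since the scaling argument above never invokes the triangle inequality and works purely with the radii appearing in the cover, the constant $K$ plays no role and the proof carries over verbatim from the metric case.
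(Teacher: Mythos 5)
Your proof is correct and follows exactly the same argument as the paper: the scaling bound $\sum_i r_i^t \leq \delta^{t-s}\sum_i r_i^s$, the resulting inequality $\mu_{\delta,d}^t(X) \leq \delta^{t-s}\mu_{\delta,d}^s(X)$, and letting $\delta \to 0$ using the finiteness of $\mu_d^s(X)$. Your closing remark that the quasi-metric constant $K$ never enters matches the paper's own observation that the proof is verbatim the metric-space argument.
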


\begin{proof}
The proof is exactly the same as for the Hausdorff measure of a metric space. Let $\{ B_{\rho, r_i}(x_i) \}_i$ be a $\delta$-cover of $Z \diagdown \{ \infty \}$. Then
\begin{equation*}
\begin{split}
\sum_i r_i^t & = \sum_i r_i^s r_i^{t-s} \leq \sum_i r_i^s \delta^{t-s} = \delta^{t-s} \sum_i r_i^s.
\end{split}
\end{equation*}
Taking the infimum over all $\delta$-covers, we get
\[ \mu_{\delta,\rho}^t(Z) \leq \delta^{t-s}\mu_{\delta,\rho}^s(Z). \]

By assumption, $\mu_\rho^s(Z) < \infty$ and by construction, $\mu_{\delta,\rho}^s(Z) \leq \mu_\rho^s(Z) < \infty$. Thus, as we let $\delta \rightarrow 0$, we get
\[ \mu_\rho^t(Z) \leq 0. \]
Since $\mu_\rho^t$ is non-negative, this implies that $\mu_\rho^t(Z) = 0$.

\end{proof}

Lemma \ref{lem:CriticalValue} tells us that there is a critical value $c \in [0, \infty]$, such that for all $s < c$, $\mu_\rho^s(Z) = \infty$, while for all $t > c, \mu_\rho^t(Z) = 0$.

\begin{mydef} \label{def:HausdorffDimension}
Let $(Z,M)$ be a strong M\"obius space. Choose a quasi-metric $\rho$ that induces $M$ and define the Hausdorff measures $\mu_\rho^s$ on $Z$ using $\rho$. Define the {\it Hausdorff-dimension} of $(Z,\rho)$ to be $\dim_{Haus}(Z,\rho) := \inf(s \in \mathbb{R} \vert \mu_\rho^s(Z) = 0).$
\end{mydef}

\begin{thmdef} \label{thm:HausdorffInvariance}
Let $(Z, M)$ be a strong M\"obius space. Then, for all (extended) quasi-metrics $\rho$, $\rho'$ that induce $M$, we have $\dim_{Haus}(Z,\rho) = \dim_{Haus}(Z, \rho')$. We define the Hausdorff-dimension of $(Z,M)$ to be
\[Ê\dim_{Haus}(Z, M) := \dim_{Haus}(Z, \rho) \]
for any (extended) quasi-metric that induces $M$.

Furthermore, $\dim_{Haus}(Z, M)$ is invariant under M\"obius equivalence.
\end{thmdef}

Recall from section \ref{sec:Preliminaries} that, if $\rho$ induces $M$, then there is a family of quasi-metrics $\rho_A$ that can be expressed as a rescaling and an involution of $\rho$. Specifically, for $A = (\omega, \alpha, \beta)$, we can write
\[ \rho_A(x,y) = \frac{\rho(x,y)}{\rho(x, \omega)\rho(\omega,y)}\frac{\rho(\alpha, \omega)\rho(\omega, \beta)}{\rho(\alpha, \beta)}. \]
Furthermore, if $\rho$ has a point at infinity, denoted $\omega$, we can take a point $o \in Z$ and consider the involution $\rho_o$. A direct computation shows that the involution of $\rho_o$ at $\omega$ is equal to $\rho$ again. Therefore, taking the involution is a reversible operation (if we have a point at infinity). We see that, if we can show that rescaling and taking an involution of $\rho$ does not change the Hausdorff-dimension, then we have shown that every quasi-metric $\rho$ that induces $M$ induces the same Hausdorff-dimension.\\

\begin{proof}[Proof of Theorem \ref{thm:HausdorffInvariance}]

Suppose $(Z,\rho)$ has no point at infinity. We can then enlarge $Z$ by an additional point, denoted $\infty$, and extend the $K$-quasi-metric $\rho$ to a $K$-quasi-metric on $Z \cup \{ \infty \}$ by setting $\rho(x, \infty) := \infty$ for all $x \in Z$ and $\rho(\infty, \infty) := 0$. This yields an extended $K$-quasi-metric space with the same Hausdorff-dimension as $(Z,\rho)$ as the Hausdorff-measure does not change when adding or removing a single point. Thus, we can assume without loss of generality that $(Z,\rho)$ has a point at infinity.

We start by looking at rescalings of $\rho$. Let $\lambda > 0$ and consider the quasi-metrics $\rho$ and $\lambda \rho$. We see immediately from the definition that $\mu_{\lambda \rho}^s = \lambda^s \mu_{\rho}^s$ for all $\lambda$, $s > 0$. Therefore, the Hausdorff-dimension does not change.

Now let $\rho'$ be the involution of $\rho$ at the point $o \in Z$. We need the following lemma.

\begin{lem} \label{lem:AuxiliaryHausdorff}
Fix a constant $\epsilon > 0$. For all $\delta < \frac{\epsilon}{K^2}$ and all $\delta$-covers $\{ B_{\rho,r_i}(x_i) \}_i$ of $Z \diagdown \{ \infty \}$, there is a subfamily of the collection $\{ B_{\rho', \frac{K^3}{\epsilon^2}r_i}(x_i) \}_i$ that is a $\frac{K^3}{\epsilon^2}\delta$-cover of $Z \diagdown ( \{ \infty \} \cup B_{\rho, \epsilon}(o) )$.
\end{lem}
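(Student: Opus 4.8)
The plan is to transport the given $d$-covering into a $d'$-covering by keeping each center $x_i$ and only enlarging its radius by the universal factor $\frac{K^3}{\epsilon^2}$, while throwing away the balls that are irrelevant to the target set $T := X \setminus (\{\infty\} \cup B_{d,\epsilon}(o))$. The guiding principle is the involution formula: for $x,y \neq \infty$ with $x \neq y$ one has $d'(x,y) = \frac{d(x,y)}{d(x,o)\,d(o,y)}$, so once both points are $d$-far from $o$ the two denominator factors are bounded below and $d'(x,y)$ is merely a controlled shrinking of $d(x,y)$. The heart of the argument is therefore to show that every original ball meeting $T$ has its center bounded away from $o$.

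First I would pass to the subfamily of those $B_{d,r_i}(x_i)$ that actually intersect $T$; the remaining balls contribute nothing to covering $T$. For a retained ball the center automatically satisfies $x_i \neq \infty$, since $d$ is extended and hence a finite-radius ball centered at $\infty$ is just $\{\infty\}$, which is disjoint from $T$.

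Next comes the key estimate, which is then immediately turned into the covering statement. Fix a retained ball and choose a witness $y_0 \in B_{d,r_i}(x_i) \cap T$, so that $d(x_i,y_0) \leq r_i \leq \delta < \frac{\epsilon}{K^2}$ while $d(o,y_0) > \epsilon$. In the quasi-metric inequality $d(o,y_0) \leq K\max(d(o,x_i), d(x_i,y_0))$ the maximum cannot be $d(x_i,y_0)$, since that would force $d(o,y_0) \leq K\delta < \frac{\epsilon}{K} \leq \epsilon$; hence $d(o,x_i) \geq \frac{1}{K}d(o,y_0) > \frac{\epsilon}{K}$, which is the crucial lower bound on the denominator factor $d(x_i,o)$. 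Now for an arbitrary $y \in T$, which lies in some retained $B_{d,r_i}(x_i)$, either $y = x_i$, giving $d'(x_i,y) = 0$, or the involution formula together with $d(x_i,y) \leq r_i$, $d(x_i,o) > \frac{\epsilon}{K}$ and $d(o,y) > \epsilon$ yields
\[ d'(x_i,y) = \frac{d(x_i,y)}{d(x_i,o)\,d(o,y)} \leq \frac{r_i}{(\epsilon/K)\cdot\epsilon} = \frac{K}{\epsilon^2}\,r_i \leq \frac{K^3}{\epsilon^2}\,r_i, \]
the last step using $K \geq 1$. Thus $y \in B_{d',\frac{K^3}{\epsilon^2}r_i}(x_i)$, and since $r_i \leq \delta$ the enlarged radii are all at most $\frac{K^3}{\epsilon^2}\delta$, so the retained subfamily is the desired $\frac{K^3}{\epsilon^2}\delta$-covering of $T$.

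The main obstacle is conceptual rather than computational: it is precisely the control of the center near $o$. The region $B_{d,\epsilon}(o)$ has to be excised because the involution blows distances up there (indeed $d'(x,o) = \infty$), so no uniform radius factor could ever cover it; the quantitative form of this phenomenon is the lower bound $d(o,x_i) > \frac{\epsilon}{K}$, and it is the proof of this bound that forces the hypothesis $\delta < \frac{\epsilon}{K^2}$. I note in passing that the estimate actually produces the smaller constant $\frac{K}{\epsilon^2}$, so the stated $\frac{K^3}{\epsilon^2}$ is comfortably sufficient and leaves room for cruder bookkeeping.
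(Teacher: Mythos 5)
Your proof is correct and follows essentially the same route as the paper: both arguments hinge on the observation that any ball of $d$-radius at most $\delta<\epsilon/K^2$ meeting $X\setminus(\{\infty\}\cup B_{d,\epsilon}(o))$ must have its center at $d$-distance greater than $\epsilon/K$ from $o$, after which the involution formula converts the $d$-radius into a $d'$-radius. The only (harmless) difference is that you estimate $d'(x_i,y)$ only for $y$ in the target set, where $d(o,y)>\epsilon$, and so obtain the sharper factor $K/\epsilon^2$, whereas the paper bounds $d(o,y)>\epsilon/K^2$ over the entire original ball and hence uses $K^3/\epsilon^2$.
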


We will first prove how the lemma implies invariance under involution. Let $\epsilon = \frac{1}{n}$. Let $\{ B_{\rho,r_i}(x_i)Ê\}_i$ be a $\delta$-covering of $Z \diagdown \{ \infty \}$ for $\delta < \frac{1}{K^2n^3} < \frac{\epsilon}{K^2}$. Then we find a $n^2K^3\delta$-covering of $Z \diagdown ( \{ \infty \} \cup B_{\rho,\epsilon}(o) )$ of the form $B_{\rho', n^2K^3r_i}(x_i)$ where $i$ runs over a subset of the indices of the original $\delta$-covering.
Thus,
\[ \mu_{n^2K^3\delta,\rho'}^s( Z \diagdown ( \{ \infty \} \cup B_{\rho, \frac{1}{n}}(o) )) \leq \sum_i \left( n^2K^3 r_i \right)^s = \left( n^2K^3 \right)^s \sum_i r_i^s. \]
Taking the infimum over all $\delta$-coverings of $Z \diagdown \{ \infty \}$, we get
\[ \mu_{n^2K^3\delta, \rho'}^s(Z \diagdown ( \{ \infty \} \cup B_{\rho, \frac{1}{n}}(o) ) ) \leq \left( n^2K^3 \right)^s \mu_{\delta,\rho}^s(Z). \]

Now suppose, $\mu_{\rho}^s(Z) = 0$. Then $\mu_{\delta, \rho}^s(Z) = 0$ for all $\delta > 0$ and hence $\mu_{n^2K^3\delta, \rho'}^s(Z \diagdown ( \{ \infty \} \cup B_{\rho, \frac{1}{n}}(o)) ) = 0$ for all $n \in \mathbb{N}$ and all $0 < \delta < \frac{1}{n^3 K^2}$ (and thus for all $\delta > 0$).

We have seen earlier that adding one point doesn't change the Hausdorff measure. Therefore, we can add the point $\infty$ and get
\[ \mu_{n^2K^3\delta, \rho'}^s(Z \diagdown B_{\rho, \frac{1}{n}}(o)) = 0 \]
for all $0 < \delta < \frac{1}{n^3 K^2}$ and $n \in \mathbb{N}$. Taking the limit for $\delta \rightarrow 0$ yields
\[ \mu_{\rho'}^s(Z \diagdown B_{\rho, \frac{1}{n}}(o)) = 0 \]
for all $n \in \mathbb{N}$. Using the $\sigma$-subadditivity of $\mu_{\rho'}^s$, we see that
\[ \mu_{\rho'}^s(Z) = \mu_{\rho'}^s(Z \diagdown \{ o \}) \leq \sum_{n=1}^{\infty} \mu_{\rho'}^s (Z \diagdown B_{\rho, \frac{1}{n}}(o)) = \sum_{n=1}^{\infty} 0 = 0. \]
We conclude that if $\mu_{\rho}^s(Z) = 0$, then $\mu_{\rho'}^s(Z) = 0$. Therefore, the Hausdorff-dimension of $(Z,\rho')$ is at most the Hausdorff-dimension of $(Z,\rho)$. Since the involution of $\rho'$ at the point $\infty$ is $\rho$ again, we get the reversed inequality as well. This implies that the Hausdorff-dimension is invariant under involution and that the Hausdorff-dimension of a strong M\"obius space is well-defined.\\

We are left to prove that the Hausdorff-dimension is invariant under M\"obius equivalence. Let $f : (Z,M) \rightarrow (Z',M')$ be a M\"obius equivalence. Consider the quasi-metric $\rho_A$ for some triple of mutually different points $A \in Z^3$. Since $f$ is a bijection, $f(A)$ is a triple of mutually different points in $Z'$. Since $f$ is a M\"obius equivalence and $\rho_A$ can be expressed purely in terms of $M$, we have $\rho_A(x,y) = \rho_{f(A)}(f(x),f(y))$ (cf.\,\cite{Buyalo} or \cite{IM17a}). 
Therefore, $\dim_{Haus}(Z, \rho_A) = \dim_{Haus}(Z', \rho_{f(A)})$ and $\dim_{Haus}(Z,M) = \dim_{Haus}(Z,M')$. This implies Theorem \ref{thm:HausdorffInvariance} (up to the proof of Lemma \ref{lem:AuxiliaryHausdorff}).
\end{proof}

\begin{proof}[Proof of Lemma \ref{lem:AuxiliaryHausdorff}]
Let $\{ B_{\rho,r_i}(x_i)Ê\}_i$ be a $\delta$-cover of $Z \diagdown \{ \infty \}$. Recall that
\[ \rho'(x_i,y) = \frac{\rho(x_i,y)}{\rho(x_i,Êo) \rho(o, y)} \leq \frac{r_i}{\rho(x_i, o) \rho(o, y)}. \]

Let $B_{\rho,r_i}(x_i)$ be a ball such that $\rho(x_i, o) > \frac{\epsilon}{K}$. We claim that the collection of balls $B_{\rho, r_i}(x_i)$ with such $x_i$ covers $Z \diagdown ( \{ \infty \} \cup B_{\rho, \epsilon}(o))$. Let $y \in Z \diagdown ( \{ \infty \} \cup B_{\rho, \epsilon}(o) )$. We find some $x_i$ such that $y \in B_{\rho, r_i}(x_i)$. We see that
\[ \epsilon < \rho(y, o) \leq K \max(\rho(y, x_i), \rho(x_i, o)). \]

Since $K\rho(y, x_i) \leq K\delta < \frac{\epsilon}{K} < \epsilon$, the inequality above implies $\epsilon < K\rho(x_i, o)$ and thus the collection of $B_{\rho,r_i}(x_i)$ with $\rho(x_i, o) > \frac{\epsilon}{K}$ covers $Z \diagdown ( \{ \infty \} \cup B_{\rho, \epsilon}(o))$. An analogous argument shows that for all $x_i$ with $\rho(x_i, o) > \frac{\epsilon}{K}$ and $y \in B_{\rho, r_i}(x_i)$, we have $\rho(y, o) > \frac{\epsilon}{K^2}$. Thus, we have for all $x_i$ with $\rho(x_i, o) > \frac{\epsilon}{K}$ and for all $y \in B_{\rho, r_i}(x_i)$
\begin{equation*}
\begin{split}
\rho'(x_i,y) & \leq \frac{r_i}{\rho(x_i, o) \rho(o, y)}\\
& \leq \frac{K^3r_i}{\epsilon^2}.
\end{split}
\end{equation*}

This implies that $B_{\rho,r_i}(x_i) \subseteq B_{\rho', \frac{K^3r_i}{\epsilon^2}}(x_i)$ and thus the collection $\left\{ B_{\rho', \frac{K^2r_i}{\epsilon^2}}(x_i) \right\}_{\rho(x_i, \infty) > \frac{\epsilon}{K}}$ is a $\frac{K^3\delta}{\epsilon^2}$-covering of $Z \diagdown ( \{ \infty \} \cup B_{\rho, \epsilon}(o) )$. This proves the lemma.

\end{proof}

\begin{rem}
Consider a metric space $(Z, \rho)$ together with a Borel measure $\mu$ and let $Q > 0$. We call $\mu$ {\it Ahlfors $Q$-regular} if there exists a constant $C > 0$ such that for every $R > 0$ and every ball $B_R$ of radius $R$, we have
\[ \frac{1}{C}R^Q \leq \mu(B_R) \leq CR^Q.Ê\]

It is a well-known result that, whenever $(Z,\rho)$ admits an Ahlfors $Q$-regular measure, the Hausdorff-dimension of $(Z,\rho)$ is equal to $Q$ (cf.\,\cite{Heinonen}). The same is true for quasi-metric spaces $(Z,\rho)$ and the notion of Hausdorff-dimension for quasi-metric spaces we introduced above. The proof is analogous to the proof in the metric case.

In \cite{LiShanmugalingam}, Li and Shanmugalingam showed that, whenever a metric space $(Z,\rho)$ admits an Ahlfors $Q$-regular measure, there is a metric $\rho'$ which is bi-Lipschitz to the quasi-metric obtained by taking the involution of $\rho$ at a point $o \in Z$ and $(Z, \rho')$ admits an Ahlfors $Q$-regular measure as well. This proves that the Hausdorff-dimension for metric spaces is invariant under the operation of taking the involution of a metric and then taking a specific metric that is bi-Lipschitz to the involution.

It is possible that this approach generalizes to quasi-metric spaces, yielding a quasi-metric notion of Ahlfors $Q$-regularity for Borel measures on strong M\"obius spaces and an alternativ proof of the invariance of the Hausdorff-dimension under involutions. However, if we want to define Ahlfors $Q$-regularity on strong M\"obius spaces by considering quasi-metric spaces first and then proving invariance under rescaling and involution -- as we have done here -- we are confronted with the fact that the topology induced by one quasi-metric $\rho$ does not agree with the M\"obius topology of $M_\rho$ in general (the quasi-metric on the dyadic numbers described in \cite{Schroederdyadic} provides an example for this). Thus, we run into some difficulties. For example, we have to ask whether the Borel $\sigma$-algebra induced by a quasi-metric $\rho$ is the same as the Borel $\sigma$-algebra induced by the M\"obius topology of a strong M\"obius structure.
\end{rem}




\section{The Nagata-dimension of strong M\"obius spaces} \label{sec:NagataDimension}

The Nagata-dimension has been introduced in a note by Assouad (\cite{Assouad}). It is a variation of the asymptotic dimension and can be defined as follows:

\begin{mydef} \label{mydef:NagataDimension}
Let $(Z,\rho)$ be a metric space and $\mathcal{B}$ a cover of $Z$. The cover $\mathcal{B}$ is called {\it $C$-bounded} if for every $B \in \mathcal{B}$, diam$(B) \leq C$.

Let $s > 0$, $m \in \mathbb{N}$. We say that a family of subsets $\mathcal{B} \subset \mathcal{P}(Z)$ has {\it $s$-multiplicity $\leq m$} if for every set $U \in Z$ with diam$(U) \leq s$, there are at most $m$ elements $B \in \mathcal{B}$ with $B \cap U \neq \emptyset$.

The Nagata-dimension $\dim_N(Z,\rho)$ is defined to be the infimum of all $n$ such that there exists a constant $c > 0$ such that for all $s > 0$ there exists a $cs$-bounded cover of $Z$ with $s$-multiplicity $\leq n+1$.
\end{mydef}

The class of metric spaces that have finite Nagata-dimension includes doubling spaces, metric trees, euclidean buildings, homogeneous or pinched negatively curved Hadamard manifolds and others (cf. \cite{LangSchlichenmaierNagata}). The Nagata-dimension is not preserved by quasi-isometries.

In order to generalise the notion of Nagata-dimension to strong M\"obius spaces, we define it for extended quasi-metrics.
\begin{mydef}
Let $(Z, \rho)$ be an (extended) quasi-metric space, $\mathcal{B}$ be a cover of $Z \diagdown \{ \infty \}$ and $C > 0$. We say that $\mathcal{B}$ is a {\it $C$-bounded cover of $Z$}, if every set $B \in \mathcal{B}$ has diam$(B) \leq C$.

Let $s > 0$, $m \in \mathbb{N}$ and $\mathcal{B} \subset \mathcal{P}(Z)$ be a collection of subsets of $Z$. We say that {\it $\mathcal{B}$ has $s$-multiplicity $\leq m$} if every set $U \subset Z$ with diam$(U) \leq s$ intersects at most $m$ elements of $\mathcal{B}$.

The Nagata-dimension $\dim_N(Z,\rho)$ is defined to be the infimum of all $n$ for which there exists a constant $c > 0$ such that for all $s > 0$ there exists a $cs$-bounded cover of $Z$ with $s$-multiplicity $\leq n+1$.
\end{mydef}

\begin{thmdef} \label{thm:NagataInvariance}
Let $(Z,\rho)$ be an (extended) quasi-metric space, $o \in Z$. Let $\rho'$ be a rescaling or an involution of $\rho$. Then $\dim_N(Z,\rho) = \dim_N(Z,\rho')$.

Therefore, given a strong M\"obius structure $M$ and any two quasi-metrics $\rho, \rho'$ that induce $M$, we have $\dim_N(Z,\rho) = \dim_N(Z,\rho')$. Thus we can define $\dim_N(Z,M) := \dim_N(Z,\rho)$ for any quasi-metric $\rho$ that induces $M$. Furthermore, $\dim_N(Z,M)$ is invariant under M\"obius equivalence.
\end{thmdef}

Note that the Nagata-dimension does not depend at all on whether $(Z,\rho)$ has a point at infinity or not. This doesn't pose a problem due to the following proposition, which is a generalization of Proposition 2.7 in \cite{LangSchlichenmaierNagata} and \cite{Lieb}.

\begin{prop} \label{prop:Supremum}
Let $(Z,\rho)$ be a quasi-metric space such that $Z = X \cup Y$. Then $\dim_N(Z,\rho) = \max(\dim_N(X,\rho), \dim_N(Y,\rho))$.
\end{prop}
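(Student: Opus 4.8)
The plan is to prove the equality by establishing the two inequalities separately. The direction $\dim_N(X,d) \geq \max(\dim_N(Y,d), \dim_N(Z,d))$ should be the straightforward one: I would show that a good cover of $X$ restricts to a good cover of any subset. Concretely, suppose $n = \dim_N(X,d)$ is realized with constant $c$, so for every $s > 0$ there is a $cs$-bounded cover $\mathcal{B}_s$ of $X$ with $s$-multiplicity $\leq n+1$. Restricting each $B \in \mathcal{B}_s$ to $Y$ (i.e. replacing $\mathcal{B}_s$ by $\{ B \cap Y : B \in \mathcal{B}_s \}$) yields a cover of $Y$; the diameters can only shrink, so it remains $cs$-bounded, and since any test set $U \subseteq Y \subseteq X$ with $\diam(U) \leq s$ still meets at most $n+1$ of the original sets, it meets at most $n+1$ of the restricted sets. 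Hence $\dim_N(Y,d) \leq n$, and symmetrically for $Z$. This gives one inequality with no essential difficulty.

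The harder direction is $\dim_N(X,d) \leq \max(\dim_N(Y,d), \dim_N(Z,d))$. Write $n := \max(\dim_N(Y,d), \dim_N(Z,d))$ and assume it is finite (otherwise there is nothing to prove). The naive idea is: for each scale $s$, take a $c_Y s$-bounded cover $\mathcal{B}_Y$ of $Y$ with $s$-multiplicity $\leq n+1$ and likewise a $c_Z s$-bounded cover $\mathcal{B}_Z$ of $Z$, and form the union $\mathcal{B}_Y \cup \mathcal{B}_Z$. This is clearly a $\max(c_Y, c_Z)s$-bounded cover of $X = Y \cup Z$, but its $s$-multiplicity is only bounded by $2(n+1)$, which would give $\dim_N(X,d) \leq 2n+1$ rather than $n$. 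The crux of the proof is therefore to recombine the two covers cleverly so that the multiplicity bound $n+1$ is recovered, not merely doubled.

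I expect this recombination to be the main obstacle, and I anticipate following the strategy of Lang--Schlichenmaier and of Liebscher referenced in the excerpt. The idea is not to take the covers at a single scale but to choose $\mathcal{B}_Y$ and $\mathcal{B}_Z$ at suitably separated scales and then \emph{absorb} one cover into the other: starting from the cover of $Y$, one enlarges (or colors/reassigns) the sets so that the points of $Z \setminus Y$ that are not yet covered get attached to nearby members, rather than contributing a fresh independent layer. A cleaner route, which I would try first, is to exploit the scale-freedom directly. Since the definition of $\dim_N$ quantifies over \emph{all} $s$ with a single constant $c$, I can afford to pass between scales $s$ and $\theta s$ for a fixed $\theta$. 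The plan is to cover $Y$ at scale $s$ and $Z$ at a much finer scale $\theta s$, so that members of $\mathcal{B}_Z$ are so small that a test set $U$ of diameter $\leq s$ interacting with the combined cover meets the two families in a controlled, non-additive way; one then checks, using the quasi-metric triangle inequality $d(x,y) \leq K\max(d(x,z),d(z,y))$ to bound how diameters and intersection patterns transform across the scale gap, that the total multiplicity can be kept at $n+1$ at the cost of enlarging only the boundedness constant $c$. The one point requiring genuine care in the quasi-metric setting (as opposed to the metric one) is that every step which would normally invoke the triangle inequality must be replaced by its $K$-quasi-metric analogue, so diameter and separation estimates acquire explicit factors of $K$; since $\dim_N$ is insensitive to multiplicative changes in the boundedness constant, these $K$-factors are harmless and only affect $c$, never the multiplicity $n+1$. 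Once the combined cover at each scale is produced with $s$-multiplicity $\leq n+1$ and boundedness constant independent of $s$, we conclude $\dim_N(X,d) \leq n$, completing the proof.
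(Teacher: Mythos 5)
Your first inequality (restriction of a cover of $X$ to $Y$ and $Z$) is correct and is exactly what the paper takes as the ``clear'' direction. For the hard inequality you correctly diagnose that the naive union of covers only yields multiplicity $2(n+1)$, and you mention in passing the right cure (absorbing one cover into the other); but the route you actually commit to --- pure scale separation, covering $Y$ at scale $s$ and $Z$ at a much finer scale $\theta s$ with no merging of sets --- does not work, and the absorption argument is never carried out. Scale separation alone gives no mechanism for the two families to interact ``non-additively'': a test set $U$ with $\diam(U)\leq \theta s$ can still meet up to $n+1$ members of the $Y$-cover \emph{and} up to $n+1$ members of the $Z$-cover, no matter how small $\theta$ is, because both multiplicity bounds are monotone in the scale. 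The multiplicity of a union of two covers is simply not controlled by the scales at which they were produced; you must literally reduce the number of distinct sets.

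The missing content, which is the whole point of the paper's (Lang--Schlichenmaier-style) argument, is the merging step together with an enlarged-test-set argument. Take a $cs$-bounded cover $\mathcal{D}$ of $Z$ with $s$-multiplicity $\leq n+1$ and a cover $\mathcal{C}$ of $Y$ at a \emph{coarser} scale, namely $cK^4(1+c)s$-bounded with $K^4(1+c)s$-multiplicity $\leq n+1$ (note: the coarser cover goes on the set being absorbed \emph{into}, the opposite of your finer-scale choice for $Z$... actually the finer cover is on $Z$, which matches, but the coarseness of $\mathcal{C}$ is essential and unused in your plan). Split $\mathcal{D}$ into the sets $D$ lying within $d$-distance $s$ of $\bigcup\mathcal{C}$ and those that do not; for each close $D$ pick an anchor $y(D)\in C(D)\in\mathcal{C}$ with $d(y(D),z(D))\leq s$ for some $z(D)\in D$, and replace each $C$ by $B_C := C\cup\bigcup_{C(D)=C}D$, keeping the far $D$'s as singleton members. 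Boundedness of $B_C$ follows from the quasi-metric inequality. For the multiplicity of a test set $U$ with $\diam(U)\leq s$: either $U$ misses every $C$, in which case it can only meet far $D$'s (a close pair would contradict the definition of ``far'') and the $s$-multiplicity of $\mathcal{D}$ gives $\leq n+1$; or $U$ meets some $C$, in which case every member of the new cover that $U$ meets contains some $C'\in\mathcal{C}$ met by the enlarged set $U' := U\cup\{y(D)\mid D\cap U\neq\emptyset\}$, whose diameter is $\leq K^4(1+c)s$, so the coarse multiplicity of $\mathcal{C}$ gives $\leq n+1$. Without this two-case argument and the deliberate mismatch of scales between $\mathcal{C}$ and $\mathcal{D}$, the bound $n+1$ cannot be recovered, so as written your proposal has a genuine gap.
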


The Nagata-dimension of a point is zero and hence $\dim_N(Z, \rho) = \dim_N(Z \diagdown \{Êp \},\rho)$ for any point $p \in Z$. This justifies that the Nagata-dimension of an extended quasi-metric space should not change when one removes the point at infinity. The proof of Proposition \ref{prop:Supremum} is a simple generalization of the proof in \cite{LangSchlichenmaierNagata} (cf. also \cite{Lieb}).

\begin{proof}
By definition of the Nagata-dimension, $\dim_N(Z, \rho) \geq \dim_N(Z', \rho)$ for every subspace $Z' \subset Z$. Thus, $\dim_N(Z,\rho) \geq \max(\dim_N(X,\rho), \dim_N(Y,\rho))$. For the other inequality, suppose $\dim_N(X,\rho), \dim_N(Y,\rho) \leq n < \infty$. We find a constant $c > 0$ such that for every $s > 0$, there is a $cs$-bounded cover $\mathcal{D}$ of $Y$ with $s$-multiplicity $\leq n+1$ and a $cK^4(1+c)s$-bounded cover $\mathcal{C}$ of $X$ with $K^5(1+c)s$-multiplicity $\leq n+1$.

We now construct a cover of $Z$. Let $\mathcal{K}$ denote the set of all $D \in \mathcal{D}$ such that there is no pair of points $y \in D, x \in \bigcup_{C \in \mathcal{C}} C$ with $\rho(x,y) \leq s$. Denote by $\mathcal{L} := \mathcal{D} \diagdown \mathcal{K}$. Then, for every $D \in \mathcal{L}$ there exist $C \in \mathcal{C}, x \in C$ and $y \in D$ such that $\rho(x,y) \leq s$. We can thus choose a map $m : \mathcal{L} \rightarrow \mathcal{C} \times X \times Y$ sending $D$ to a triple $m(D) = (C(D), x(D), y(D))$ such that $x(D) \in C(D),$ $y(D) \in D$ and $\rho(x(D),y(D)) \leq s$.

For every $C \in \mathcal{C}$, we define $B_C := C \cup_{C(D) = C} D$. For $D \in \mathcal{K}$, we define $B_D := D$. Since every element of $\mathcal{C}$ and $\mathcal{D}$ appears in the definition of some $B_C$ or $B_D$, the collection $\mathcal{B} := \{ B_C \vert C \in \mathcal{C} \} \cup \{ B_D \vert D \in \mathcal{L} \}$ is a cover of $Z$.

We are left to compute the diameter of $B_C$ and estimate the $s$-multiplicity. By construction and using the fact that $\rho$ is a $K$-quasi-metric, the diameter of $B_C$ has to be bounded by $K^4 \max(cK^4(1+c), c, 1)s = cK^{8}(1+c)s$. Since $B_D$ has diameter at most $cs$, we see that $\mathcal{B}$ is $cK^{8}(1+c)s$-bounded.

For the multiplicity, let $U$ be a subset of $Z$ with diam$(U) \leq s$. If $U \cap C = \emptyset$ for all $C \in \mathcal{C}$, then, by the $s$-multiplicity of $\mathcal{D}$, $U$ meets at most $n+1$ many elements of $\mathcal{D}$ and hence at most $n+1$ many elements of $\mathcal{B}$, since every element of $\mathcal{C}, \mathcal{D}$ appears in exactly one element of $\mathcal{B}$. Now suppose there is some $C \in \mathcal{C}$ with $U \cap C \neq \emptyset$. Then $U \cap D \neq \emptyset$ can only be true for $D \in \mathcal{L}$. In particular, we have some $y(D) \in D$ and some $x(D) \in C(D)$ such that $\rho(x(D),y(D)) \leq s$. Define $U' := U \cup \{ x(D) \vert D \cap U \neq \emptyset \}$. Then diam$(U') \leq K^4\max(1, c)s \leq K^4(1+c)s$. Since the $K^4(1+c)s$-multiplicity of $\mathcal{C}$ is $\leq n+1$, we see that $U'$ can intersect at most $n+1$ many $C \in \mathcal{C}$. Thus, $U$ can intersect at most $n+1$ many sets of $\mathcal{B}$ and we conclude that the $s$-multiplicity of $\mathcal{B}$ is $\leq n+1$.
\end{proof}

As for the Hausdorff-dimension, in order to prove Theorem \ref{thm:NagataInvariance}, we need to show that the Nagata-dimension of quasi-metric spaces is invariant under rescaling and involution. One can see directly from the definition that the Nagata-dimension is invariant under rescaling (in fact, it is invariant under bi-Lipschitz maps). We are left to prove that the Nagata-dimension is invariant under taking involution. This is the content of the following proposition which is a generalization of a result due to Xiangdong (cf.\,Theorem 4.2 and Proposition 4.3 in \cite{XieXiangdong}).

\begin{prop} \label{prop:NagataInvolution}
Let $\rho$ be a $K$-quasi-metric on $Z$, $o \in Z \diagdown \{ \infty \}$. Let $\rho_o$ be the involution of $\rho$ at the point $o$. Then

\[ \dim_N(Z,\rho) = \dim_N(Z,\rho_o). \]
\end{prop}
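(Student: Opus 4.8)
The plan is to follow the same template as for the Hausdorff-dimension: reduce to a single inequality by exploiting that an involution is an involution, and then transfer good covers through the explicit formula for $d_o$, this time paying attention to the multiplicity rather than to a measure. I would first record the symmetry. Let $p$ denote the point that played the role of $\infty$ for $d$; it is a finite point for $d_o$ with $d_o(x,p)=1/d(x,o)$, while $o$ becomes the point at infinity of $d_o$. A direct computation from Definition \ref{def:Involution} shows that the involution of $d_o$ at $p$ returns $d$, i.e. $(d_o)_p=d$. Hence it suffices to prove the single inequality $\dim_N(X,d_o)\le\dim_N(X,d)$, since applying it to the pair $(d_o,p)$ yields the reverse inequality. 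So I assume $\dim_N(X,d)=n<\infty$ and fix a constant $c$ so that for every $s>0$ there is a $cs$-bounded cover of $(X,d)$ with $s$-multiplicity $\le n+1$.

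Second, the local picture. I would decompose $X\setminus\{o\}$ into the $d$-annuli $A_k:=\{x: K^k\le d(x,o)<K^{k+1}\}$, $k\in\mathbb{Z}$. On each $A_k$ the formula $d_o(x,y)=d(x,y)/(d(x,o)d(o,y))$ gives $K^{-2k-2}d\le d_o\le K^{-2k}d$, so $d_o$ restricted to $A_k$ is bi-Lipschitz, with the uniform constant $K^2$, to the rescaled metric $K^{-2k}d$. By bi-Lipschitz- and rescaling-invariance, together with the subset monotonicity that follows from Proposition \ref{prop:Supremum} (take $Z=X$), one gets $\dim_N(A_k,d_o)=\dim_N(A_k,d)\le n$ with covering constant independent of $k$. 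The same formula shows that in the $d_o$-metric the $A_k$ are concentric shells around $p$, with $d_o(x,p)\in(K^{-k-1},K^{-k}]$ and $\diam_{d_o}(A_k)\le K^{2-k}$; in particular a whole tail $\bigcup_{k>k_0}A_k\cup\{p\}$ sits inside the $d_o$-ball of radius $\approx K^{-k_0}$ about $p$. The separation estimate I will use repeatedly is: if a set has $d_o$-diameter $\le t$ and $t\le K^{-k_0}$, then it meets at most two consecutive bulk annuli $A_k,A_{k+1}$ with $k,k+1\le k_0$; this follows from the quasi-triangle inequality applied to $d_o(\cdot,p)$.

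Third, the assembly. Using the colouring $X_{\mathrm{ev}}=\bigcup_{k\ \mathrm{even}}A_k$, $X_{\mathrm{od}}=\bigcup_{k\ \mathrm{odd}}A_k$ and Proposition \ref{prop:Supremum}, it is enough to bound $\dim_N$ of each colour by $n$. Fixing a colour and a scale $t$, I would choose $k_0$ with $t\in(K^{-k_0-1},K^{-k_0}]$. On each bulk annulus of that colour ($k\le k_0$) I transfer the cover of $(X,d)$ at $d$-scale $s_k:=K^{2}K^{2k}t$, restricted to $A_k$; the bi-Lipschitz bound makes it $cK^2t$-bounded in $d_o$, and a $d_o$-set of diameter $\le t$ has $d$-diameter $\le s_k$ inside $A_k$, so it meets $\le n+1$ of its members. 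Because consecutive annuli of one colour differ by two in index, the separation estimate guarantees that such a set meets at most one bulk annulus of the colour, contributing multiplicity $\le n+1$. The whole tail of the colour is swept into a single set of $d_o$-diameter $\lesssim t$, and the only surviving overlap is between this tail set and the single adjacent bulk annulus; this is removed exactly by the merging step in the proof of Proposition \ref{prop:Supremum}, by absorbing the tail set into the neighbouring cover, trading the extra overlap for the sharp bound $n+1$ while inflating diameters only by a controlled power of $K$.

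The step I expect to be the main obstacle is precisely this last one: obtaining multiplicity exactly $n+1$ rather than a constant multiple of it. The naive transfer controls the multiplicity only up to the factor coming from neighbouring annuli and from the clustering of infinitely many shells at $p$; turning this into the sharp value requires (i) absorbing the $p$-cluster into one bounded set at each scale and (ii) the colour splitting together with the absorption/merging technique of Proposition \ref{prop:Supremum}. Keeping all constants -- $c$, the bi-Lipschitz constant $K^2$, and the diameter inflation from merging -- uniform in both the scale $t$ and the annulus index $k$ is the bookkeeping heart of the argument; once that is in place, the infimum defining $\dim_N$ is unaffected and $\dim_N(X,d_o)\le n$ follows.
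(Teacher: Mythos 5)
Your reduction to the single inequality $\dim_N(X,d_o)\le\dim_N(X,d)$ via $(d_o)_p=d$ is the same as the paper's (the paper also handles the case where $d$ has no point at infinity by first adjoining one; add that). From there you take a genuinely different route. The paper follows Xie Xiangdong: it first builds, via Propositions \ref{prop:NagataDimensionCharacterization} and \ref{prop:LangSchlichenmaierCovering}, a coherent family of coverings $\mathcal{B}^j$ at all scales $r^j$ simultaneously, with the nesting/separation property (iv); for each $x$ near $o$ it then selects the largest scale $j(x)$ at which the chosen set is still $\tilde{c}s$-small in $d_o$, passes to maximal elements, and uses (iv) to get $s$-multiplicity $\le 1$ within each of the $n+1$ families; the region far from $o$ (your tail) is a single $d_o$-bounded set merged into family $0$. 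Your annular decomposition replaces all of that multi-scale machinery by the observation that $d_o$ is uniformly bi-Lipschitz to a rescaling of $d$ on each annulus $A_k=\{K^k\le d(\cdot,o)<K^{k+1}\}$, a separation estimate between non-adjacent annuli, and the union theorem (Proposition \ref{prop:Supremum}). This is more elementary -- it avoids Proposition \ref{prop:LangSchlichenmaierCovering} entirely -- at the price of pushing all the difficulty into the assembly of the per-annulus covers, which the paper's route handles once and for all through property (iv).

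Two details in your sketch need repair, though I believe neither is fatal. First, the separation estimate: the correct statement is that for $x\in A_k$, $y\in A_{k'}$ with $k'\ge k+2$ one has $d_o(x,y)\ge \frac{1}{K\,d(x,o)}\ge K^{-k-2}$ (compute directly from $d(y,o)\le K d(x,y)$; do not invoke the quasi-triangle inequality for $d_o$, whose constant exceeds $K$). With $t\in(K^{-k_0-1},K^{-k_0}]$ this yields ``at most two consecutive annuli'' only for $k\le k_0-3$, so the annuli with $k_0-2\le k\le k_0$ must also be swallowed by the tail set, whose $d_o$-diameter remains $O(t)$. Second, the bulk/tail split depends on the scale $t$, so Proposition \ref{prop:Supremum} cannot be applied to it as a decomposition of the space; you must perform the merging scale by scale, absorbing the single tail set into the bulk cover and paying for it by testing the bulk multiplicity at scale $O(t)$ rather than $t$, which in turn forces the bulk covers to be built at that inflated scale from the start. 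This is exactly the paper's final step with $B_s$, $\mathcal{D}_0^{close}$ and $\mathcal{D}_0^{far}$, so the technique is available; but as written your proposal asserts the conclusion of that step rather than carrying it out.
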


We claim that, in order to prove proposition \ref{prop:NagataInvolution}, it is enough to prove that $\dim_N(Z,\rho) \geq \dim_N(Z,\rho_o)$. There are two cases to consider. If $Z$ has a point $\infty$ at infinity with respect to $\rho$, then the involution of $\rho_o$ at $\infty$ is again $\rho$. Therefore, the roles of $\rho$ and $\rho_o$ in the lemma above are interchangeable and it is sufficient to prove $\dim_N(Z,\rho) \geq \dim_N(Z,\rho_o)$.

If $Z$ has no point at infinity with respect to $\rho$, we extend $(Z,\rho)$ by adding a point $\infty$ at infinity. When doing so, $\rho$ remains a quasi-metric and we can write $\rho$ as the involution of $\rho_o$ at the point $\infty$ as above. Since adding a single point does not change the Nagata-dimension, the claim now follows from the first case. It is thus sufficient to prove that $\dim_N(Z, \rho) \geq \dim_N(Z,\rho_o)$ in order to prove proposition \ref{prop:NagataInvolution}. In particular, this tells us that we may assume that $\dim_N(Z,\rho) < \infty$.\\

Before we can prove Proposition \ref{prop:NagataInvolution}, we need to do some preparations. Specifically, we need to state and generalize several results from \cite{LangSchlichenmaierNagata}.

\begin{prop} \label{prop:NagataDimensionCharacterization}
Let $(Z,\rho)$ be a quasi-metric space with $K$-quasi-metric $\rho$ and $n \geq 0$ an integer. The following are equivalent:

\begin{enumerate}

\item[(1)] The Nagata-dimension $\dim_N(Z,\rho) \leq n$, i.e. there exists a constant $c_1 > 0$ such that for all $s > 0,$ $Z \setminus \{ \infty \}$ has a $c_1s$-bounded covering with $s$-multiplicity $\leq n+1$.

\item[(2)] There exists a constant $c_2 > 0$ such that for all $s > 0,$ $Z \setminus \{ \infty \}$ admits a $c_2s$-bounded covering of the form $\mathcal{B} = \bigcup_{k=1}^{n+1} \mathcal{B}_k$ where each family $\mathcal{B}_k$ has $s$-multiplicity $\leq 1$.

\end{enumerate}
\end{prop}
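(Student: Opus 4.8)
The plan is to prove the two implications separately. The direction $(2) \Rightarrow (1)$ is the easy one: given a covering $\mathcal{B} = \bigcup_{k=0}^n \mathcal{B}_k$ that is $c_2 s$-bounded and in which each family $\mathcal{B}_k$ has $s$-multiplicity $\leq 1$, I would simply take the union $\mathcal{B}$ as the desired covering. It is $c_2 s$-bounded by hypothesis, so we may set $c_1 := c_2$. For the multiplicity, observe that if $U \subset X$ has $\diam(U) \leq s$, then $U$ meets at most one element from each of the $n+1$ families $\mathcal{B}_0, \dots, \mathcal{B}_n$, hence at most $n+1$ elements of $\mathcal{B}$ in total. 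This gives $s$-multiplicity $\leq n+1$, so $\dim_N(X,d) \leq n$.

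The substantial direction is $(1) \Rightarrow (2)$, where we must split a single covering with bounded multiplicity into $n+1$ families, each with multiplicity one (i.e.\ each a \emph{disjoint} family in the appropriate sense). The strategy, following the metric argument in \cite{LangSchlichenmaierNagata}, is a coloring argument. Start with a $c_1 s$-bounded covering $\mathcal{A}$ with $s$-multiplicity $\leq n+1$. The key idea is to define a graph $G$ on the elements of $\mathcal{A}$ by joining two sets $A, A'$ whenever they are ``close'' in the sense that there exist $a \in A$, $a' \in A'$ with $d(a,a') \leq s$ (equivalently, whenever some small set $U$ meets both). One then argues that this graph has bounded vertex degree, so that it admits a proper coloring with a bounded number $N$ of colors via a greedy argument. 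Each color class is then a family in which any two distinct sets are at mutual distance $> s$, which forces $s$-multiplicity $\leq 1$. This produces a decomposition into finitely many multiplicity-one families; a standard bookkeeping step then merges or re-indexes them to obtain exactly $n+1$ families while controlling the boundedness constant, yielding $c_2$ in terms of $c_1$, $K$, and $n$.

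The main obstacle, and the place where the quasi-metric setting differs from the metric one, is controlling the vertex degree of the proximity graph $G$ and the diameters after merging. In a genuine metric space one uses the triangle inequality freely to bound distances, but here one only has $d(x,y) \leq K \max(d(x,z), d(z,y))$, so every distance estimate picks up powers of $K$. Concretely, when I bound the degree of a vertex $A$, I need to cover the $s$-neighborhood of $A$ by sets of controlled diameter and invoke the $s$-multiplicity hypothesis; the relevant ``test set'' $U$ around a point will have diameter bounded by some fixed power of $K$ times $s$ rather than by $s$ itself, so care is needed to keep the multiplicity bound applicable (possibly by rescaling $s$ by a constant at the outset, as is done elsewhere in this section). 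Tracking these $K$-powers carefully is exactly what distinguishes the proof from its metric prototype, and it is the step I expect to require the most attention; the coloring and re-indexing, once the degree bound is in hand, are routine.
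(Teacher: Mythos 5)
Your direction $(2)\Rightarrow(1)$ is fine and is exactly the paper's (one-line) argument. The direction $(1)\Rightarrow(2)$, however, has a genuine gap: the graph-colouring strategy fails for two separate reasons. First, the proximity graph need not have bounded vertex degree. The $s$-multiplicity hypothesis only bounds the number of elements of $\mathcal{A}$ meeting a \emph{single} test set of diameter $\leq s$; it does not bound the number of elements coming $s$-close to a fixed $A\in\mathcal{A}$, because $A$ has diameter of order $c_1s$ and its $s$-neighbourhood cannot in general be covered by boundedly many sets of diameter $\leq s$ (no doubling is assumed). Already for an infinite metric star (centre $v$, infinitely many unit edges) the cover by the ball $\bar B(v,\tfrac12)$ together with the outer halves of the edges has $\tfrac14$-multiplicity $2$, yet the central set is $\tfrac14$-close to infinitely many others, so its degree is infinite. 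Second, even granting a proper colouring with some finite number $N$ of colours, there is no ``bookkeeping step'' that reduces $N$ colour classes to $n+1$: merging two families each of $s$-multiplicity $\leq 1$ generally produces a family of $s$-multiplicity $2$, so at best you would obtain condition $(2)$ with $n$ replaced by $N-1$, which is strictly weaker and does not recover the dimension.

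The paper avoids both problems with the Lang--Schlichenmaier inclusion--exclusion construction rather than a colouring. Starting from a $cK^{2n}s$-bounded cover $\mathcal{B}$ with $K^{2n}s$-multiplicity $\leq n+1$ (note the rescaling of $s$, which you correctly anticipated would be needed somewhere), one forms iterated $s$-neighbourhoods $N^0B:=B$ and $N^iB:=$ the $s$-neighbourhood of $N^{i-1}B$, and defines $\mathcal{B}_i$ to consist of all sets
\[ \bigcap_{j=1}^{i} N^{i-1}B_j \setminus \bigcup_{B \notin \{B_1,\dots,B_i\}} N^{i}B \]
with $B_1,\dots,B_i\in\mathcal{B}$ mutually distinct. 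An induction shows that a point not covered by $\mathcal{B}_1\cup\dots\cup\mathcal{B}_{n+1}$ would lie in iterated neighbourhoods of $n+2$ distinct elements of $\mathcal{B}$, contradicting the multiplicity bound, so exactly $n+1$ families suffice; and two distinct sets of the same $\mathcal{B}_i$ meeting a common set of diameter $\leq s$ is impossible because each such set excludes the $i$-th neighbourhood of every $B$ not in its defining collection. This yields the $n+1$ families with $s$-multiplicity $\leq 1$ directly, with no degree bound and no merging. To repair your write-up you would need to replace the colouring argument by this (or an equivalent) construction.
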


We use Proposition \ref{prop:NagataDimensionCharacterization} to prove

\begin{prop} \label{prop:LangSchlichenmaierCovering}
Let $(Z,\rho)$ be a quasi-metric space with $\dim_N(Z) \leq n < \infty$. Then there is a constant $c > 0$ such that for all sufficiently large $r > 1$, there exists a sequence of coverings $\mathcal{B}^j$ of $Z \setminus \{Ê\infty \}$ with $j \in \mathbb{Z}$, satisfying the following properties:

\begin{enumerate}

\item[(i)] For every $j \in \mathbb{Z}$, we can write $\mathcal{B}^j = \bigcup_{k=0}^n \mathcal{B}^j_k$ where each $\mathcal{B}^j_k$ is a $cr^j$-bounded family with $r^j$-multiplicity $\leq 1$.

\item[(ii)] For every $j \in \mathbb{Z}$ and $x \in Z$, there exists a set $C \in \mathcal{B}^j$ that contains the closed ball $B_{\rho, r^j}(x)$.

\item[(iii)] For every $k \in \{ 0, \dots, nÊ\}$ and every bounded set $B \subset Z$, there is a set $C \in \mathcal{B}_k := \bigcup_{j \in \mathbb{Z}} \mathcal{B}^j_k$ such that $B \subset C$.

\item[(iv)] Whenever $B \in \mathcal{B}^i_k$ and $C \in \mathcal{B}^j_k$ for some $k$ and $i < j$, then either $B \subset C$ or $\rho(x,y) > r^i$ for all $x \in B, y \in C$.

\end{enumerate}
\end{prop}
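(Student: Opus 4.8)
The plan is to build the covers scale by scale from the characterization in Proposition \ref{prop:NagataDimensionCharacterization} and then to \emph{saturate} them so that the nesting property (iv) holds; the whole construction is modelled on the way one builds dyadic decompositions on a (quasi-)metric space, with the colors $k=0,\dots,n$ playing the role of the bounded multiplicity. Throughout, $c$ denotes a constant that is allowed to grow from line to line but depends only on $K$, on $n$ and on the constant $c_2$ furnished by Proposition \ref{prop:NagataDimensionCharacterization}; the parameter $r$ will be chosen large only at the very end, once all the estimates are in place.

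First I would fix, for every $j\in\mathbb Z$, a colored cover at scale $r^j$. Applying Proposition \ref{prop:NagataDimensionCharacterization} with $s=\lambda r^j$, where $\lambda=\lambda(K)$ is a large auxiliary constant, produces a $c_2\lambda r^j$-bounded cover $\mathcal C^j=\bigcup_{k=0}^n\mathcal C^j_k$ in which each color $\mathcal C^j_k$ has $\lambda r^j$-multiplicity $\le 1$. The key elementary observation is that $\lambda r^j$-multiplicity $\le 1$ forces the members of a single color to be pairwise $\lambda r^j$-separated, hence disjoint: if two members contained points at distance $\le\lambda r^j$, the two-point set realizing that distance would meet both. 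Thus each $\mathcal C^j_k$ is a \emph{disjoint}, well-separated family, which is exactly the structure that makes the later steps work. To obtain the thickness property (ii) I would replace each $C\in\mathcal C^j_k$ by its closed $r^j$-neighborhood $\widehat C=\{x: d(x,C)\le r^j\}$: since $\mathcal C^j$ covers $X\setminus\{\infty\}$, every point $x$ lies in some $C$, whence $B_{d,r^j}(x)\subseteq\widehat C$, giving (ii); a two-fold application of the quasi-triangle inequality shows $\diam\widehat C\le cr^j$, and the same inequality shows that the separation of the fattened family is still at least $(\lambda/K^2)r^j$, so that choosing $\lambda>K^2$ preserves $r^j$-multiplicity $\le 1$ and hence (i).

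The heart of the argument is to upgrade these colored, disjoint, thick covers to nested ones, i.e.\,to arrange (iv). Working one color $k$ at a time, I would \emph{saturate from fine to coarse}: each coarse set absorbs every finer set of the same color that it meets, assigning each finer piece to the finest coarser set it touches so that the absorption relation becomes a forest. Descendants then lie inside their ancestors, which yields the containment alternative of (iv), while two sets that are not related by the forest stay $r^i$-separated because their underlying base pieces were $\lambda r^{(\cdot)}$-separated and the absorbed material only enlarges a set at scale $j$ by $\sum_{i<j}cr^i=O\!\left(\tfrac{r^j}{r-1}\right)$. This geometric series is the reason $r$ must be taken large: once $r$ is large enough, the total enlargement is a small fraction of $r^j$, so boundedness (i) survives with a slightly larger $c$, the separation between distinct saturated sets of color $k$ remains $>r^j$ so that $r^j$-multiplicity $\le 1$ and hence (i) are preserved, and the thickness (ii) is only improved by the enlargement.

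The step I expect to be the main obstacle is precisely this saturation, for two reasons. First, the individual color families $\mathcal C^j_k$ are \emph{not} covers of $X$ -- only their union over $k$ is -- so the forest structure cannot be defined by containment of points and must instead be built purely from the incidence ``$B$ meets $C$''; one has to check that ``finest coarser set met'' is well defined (using disjointness within each color) and that it really produces nesting rather than straddling. Second, the estimates on diameter and separation must be run simultaneously across the bi-infinite family of scales, so that the constant $c$ and the separation lower bound do not degrade as one chains absorptions through many scales; this is what pins down how large $\lambda$ and $r$ must be. Finally, property (iii) is obtained from the coarse end of the construction: given a bounded set $B\subset B_{d,R}(x_0)$, choosing $j$ with $r^j\ge R$ and invoking (ii) places $B$ inside a member of $\mathcal B^j$, and a short additional argument -- ensuring that every color, and not merely one, carries arbitrarily large sets through the saturation (if necessary by appending to each color an exhausting nested chain of saturated sets based at a fixed point) -- promotes this to the required statement for each fixed $k$. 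Assembling the colored families $\mathcal B^j=\bigcup_k\mathcal B^j_k$ and collecting the constants then gives all of (i)--(iv) for every sufficiently large $r$.
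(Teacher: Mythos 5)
Your overall architecture matches the intended proof (and Lang--Schlichenmaier's original, cf.\,\cite{LangSchlichenmaierNagata}): take the colored covers of Proposition \ref{prop:NagataDimensionCharacterization} at each scale $r^j$, fatten by $r^j$-neighbourhoods to secure (ii), then saturate each color across scales and choose $r$ large so that a geometric series controls the growth of diameters. Two of your devices, however, would fail as written. First, for (iii): appending to each color an exhausting nested chain of sets based at a fixed point destroys property (i) --- a set of diameter comparable to $r^j$ adjoined to $\mathcal{B}^j_k$ will in general come within distance $r^j$ of the existing member of $\mathcal{B}^j_k$ near the basepoint, so the $r^j$-multiplicity $\leq 1$ of that color is lost (and a set of larger diameter breaks $cr^j$-boundedness outright). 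The correct device is not to add sets but to relabel: fix a basepoint $z$ and permute the colors $\mathcal{B}^j_0,\dots,\mathcal{B}^j_n$ at each scale $j$ so that for every $k$ and every $m\in\mathbb{Z}$ the member containing $z$ at scale $j=m(n+1)+k$ carries color $k$; after fattening, each color then contains $B_{d,r^j}(z)$ for arbitrarily large $j$ of the right residue class, which yields (iii) without touching (i).

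Second, the forest-based saturation (``assign each finer piece to the finest coarser set it touches'') does not by itself deliver (iv). Property (iv) demands containment whenever two same-colored saturated sets at scales $i<j$ come within $r^i$ of each other, and such a pair need not be ancestor-related in your forest: the finer set may have been assigned to a parent at an intermediate scale whose own parent chain skips scale $j$ entirely. To repair this you must prove that closeness forces ancestry, i.e.\ that the parent chain of $B$ necessarily passes through $C$; this requires taking the touching threshold comparable to the diameters $cr^i$ of the finer sets (not merely $r^i$), taking the separation parameter large enough that at each scale the candidate parent is unique, and again summing the accumulated displacements geometrically. The cleaner route is to define the saturation $\hat C$ of $C\in\mathcal{B}^j_k$ as the union of $C$ with \emph{all} finer sets reachable by a chain $C\succ C_1\succ\dots\succ C_m=B$ of the incidence relation, so that a fine set may be absorbed into several coarse sets rather than a single parent; then whenever $\hat B$ and $\hat C$ come within $r^i$ one extracts such a chain from $C$ down to $B$ directly and concludes $\hat B\subset\hat C$. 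With these two repairs your outline goes through.
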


Proposition \ref{prop:NagataDimensionCharacterization} is a generalised version of a larger proposition (Proposition 2.5) in \cite{LangSchlichenmaierNagata} which characterises the definition of the Nagata-dimension by four different conditions. While it would be interesting to check whether the entire proposition can be generalised to quasi-metrics, we only need the characterisation presented here for which we provide a more direct proof. Proposition \ref{prop:LangSchlichenmaierCovering} is a direct generalisation of Proposition 4.1 in \cite{LangSchlichenmaierNagata} and so is its proof presented below.

\begin{proof}[Proof of proposition \ref{prop:NagataDimensionCharacterization}]
The implication $(2) \Rightarrow (1)$ is trivial. For $(1) \Rightarrow (2)$, suppose $\dim_N(Z,\rho) = n$ and let $c > 0$ be the constant in the definition of $\dim_N(Z,\rho)$. Let $s > 0$. We can find a $cK^{2n}s$-bounded covering $\mathcal{B}$ of $Z \diagdown \{ \infty \}$ with $K^{2n}s$-multiplicity $\leq n+1$. For simplicity, we assume without loss of generality that $c \geq 1$. We are going to construct $n+1$ families $\mathcal{B}_i$ of $cK^{4n}s$-bounded subsets of $Z$ such that $\cup_{i=1}^{n+1} \mathcal{B}_i$ is a covering of $Z$ and each $\mathcal{B}_i$ has $s$-multiplicity $\leq 1$, which implies (2) with constant $cK^{4n}$.

Let $B \in \mathcal{B}$. We inductively define $N^0B := B$ and $N^iB$ as the $s$-neighbourhood of $N^{i-1}B$ for $i > 0$. Note that diam$(N^iB) \leq K^2 \max(s, \diam(N^{i-1}B)) \leq cK^{2n+2i}s$ and the collection of all $N^iB$ -- denoted $N^i\mathcal{B}$ -- has $K^{2(n-i)}s$-multiplicity $\leq n+1$. Both statements are easily proved by induction. We now define our new covering. Let $i \in \{ 1, \dots n + 1 \}$. We define $\mathcal{B}_i$ to be the collection of sets of the following form
\[ A = \bigcap_{j = 1}^i N^{i-1}B_j \setminus \bigcup_{B \notin \{ B_1, \dots, B_i \}} N^iB \]
where $B_1, \dots, B_i \in \mathcal{B}$ are mutually distinct sets.\\

Since $N^i\mathcal{B}$ has $K^{2(n-i)}s$-multiplicity $\leq n+1$, every point $x$ is contained in at most $n+1$ many elements of $N^i\mathcal{B}$ for every $i \geq 0$. We claim that $\bigcup_{i=1}^{n+1} \mathcal{B}_i$ is a covering of $Z$. We will show this by using induction to prove the following claim for every $x \in Z$ and then show that the induction ends at $i = n+1$.

\begin{claim} \label{claim:Induction}
Let $x \in Z$. Then for all $i \geq 1$, either $x$ is contained in an element of $\mathcal{B}_j$ for some $j \leq i$, or there are mutually distinct $B_1, \dots, B_{i+1} \in \mathcal{B}$ such that $x \in \left( \bigcap_{j=1}^i N^{i-1}B_j \right) \cap N^iB_{i+1}$.
\end{claim}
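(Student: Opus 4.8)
The plan is to prove Claim \ref{claim:Induction} by induction on $i$, leaning on the two structural facts recorded just above the statement: first, that $N^{i-1}B \subseteq N^iB$ for every $B \in \mathcal{B}$, since $N^iB$ is the $s$-neighbourhood of $N^{i-1}B$; and second, that $x$ lies in at most $n+1$ members of each family $N^i\mathcal{B}$, because $\{x\}$ has diameter $0$ and $N^i\mathcal{B}$ has $K^{2(n-i)}s$-multiplicity $\leq n+1$. Throughout I take $x \in X \setminus \{\infty\}$, which is all that is needed since $\mathcal{B}$ is a cover of $X \setminus \{\infty\}$.

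For the base case $i = 1$, I would use that $\mathcal{B}$ covers $X \setminus \{\infty\}$ to pick $B_1 \in \mathcal{B}$ with $x \in B_1 = N^0B_1$, and then split into two cases. If there is no $B \neq B_1$ with $x \in N^1B$, then $x \in B_1 \setminus \bigcup_{B \notin \{B_1\}} N^1B$, which is by definition an element of $\mathcal{B}_1$, giving the first alternative. Otherwise there is some $B_2 \neq B_1$ with $x \in N^1B_2$, and then $B_1, B_2$ are mutually distinct with $x \in N^0B_1 \cap N^1B_2$, which is exactly the second alternative at $i = 1$.

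For the inductive step I assume the claim at stage $i$ and derive it at stage $i+1$. If the first alternative already holds at stage $i$ it holds a fortiori at stage $i+1$, so I may assume the second alternative: there are distinct $B_1, \dots, B_{i+1}$ with $x \in \bigcap_{j=1}^i N^{i-1}B_j \cap N^iB_{i+1}$. Invoking the nesting $N^{i-1}B_j \subseteq N^iB_j$ upgrades this to $x \in \bigcap_{j=1}^{i+1} N^iB_j$. I would then test $x$ against the candidate set $A = \bigcap_{j=1}^{i+1} N^iB_j \setminus \bigcup_{B \notin \{B_1, \dots, B_{i+1}\}} N^{i+1}B \in \mathcal{B}_{i+1}$. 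If $x \in A$, the first alternative holds at stage $i+1$; if not, then since $x$ already lies in $\bigcap_{j=1}^{i+1} N^iB_j$, its absence from $A$ forces $x \in N^{i+1}B_{i+2}$ for some $B_{i+2} \notin \{B_1, \dots, B_{i+1}\}$, and $B_1, \dots, B_{i+2}$ are then distinct with $x \in \bigcap_{j=1}^{i+1} N^iB_j \cap N^{i+1}B_{i+2}$, which is the second alternative at stage $i+1$.

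The claim itself is not deep; the thing to get right, rather than a genuine obstacle, is the index bookkeeping and the distinctness of the $B_j$, and the real crux is the nesting observation $N^{i-1}B_j \subseteq N^iB_j$, which is precisely what lets an intersection of ``$N^{i-1}$-sets'' at stage $i$ be reread as an intersection of ``$N^i$-sets'' feeding stage $i+1$, closing the induction. Finally, outside the claim, I would record why the induction terminates: the second alternative is impossible at $i = n+1$, since it would place $x$ in the $n+2$ distinct members $N^{n+1}B_1, \dots, N^{n+1}B_{n+2}$ of $N^{n+1}\mathcal{B}$ (using $N^nB_j \subseteq N^{n+1}B_j$), contradicting the multiplicity bound $\leq n+1$. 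Hence at $i = n+1$ the first alternative must hold, so every $x \in X \setminus \{\infty\}$ lies in some element of $\bigcup_{j=1}^{n+1} \mathcal{B}_j$, proving that this union covers $X \setminus \{\infty\}$.
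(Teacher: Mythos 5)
Your proof is correct and follows essentially the same induction as the paper: establish the dichotomy at $i=1$ from the covering property of $\mathcal{B}$, then use the nesting $N^{i-1}B \subseteq N^iB$ to reread the stage-$i$ intersection as $\bigcap_{j=1}^{i+1}N^iB_j$ and test membership in the corresponding element of $\mathcal{B}_{i+1}$. Your termination argument via the multiplicity bound on $N^{n+1}\mathcal{B}$ also matches the paper's treatment following the claim.
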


\begin{proof}[Proof of Claim \ref{claim:Induction}]

Let $x \in Z$. We start the induction at $i = 1$. If $x$ is contained in exactly one element $B \in \mathcal{B}$ and is not contained in $N^1C$ for any other $C \in \mathcal{B}$, then $x$ is contained in an element of $\mathcal{B}_1$. Suppose this is not the case. Then $x$ is contained in at least one element $B \in \mathcal{B}$ (since $\mathcal{B}$ is a covering) and at least one element $N^1C \in N^1\mathcal{B}$ such that $B \neq C$ (since $x$ is not covered by $\mathcal{B}_1$). This concludes the start of the induction.

Suppose now, the claim is true for all $1 \leq j \leq i$ for a fixed $i \geq 1$. We want to prove the claim for $i+1$. Suppose $x$ is not contained in $\mathcal{B}_j$ for all $1 \leq j \leq i$. Then, by induction-assumption, there exist mutually distinct $B_1, \dots B_{i+1}$ such that $x \in \bigcap_{j=1}^i N^{i-1}B_j \cap N^iB_{i+1} \subset \bigcap_{j=1}^{i+1} N^i B_j$. Suppose, $x$ is not contained in any element of $\mathcal{B}_{i+1}$; in particular, $x \notin \bigcap_{j=1}^{i+1} N^iB_j \setminus \bigcup_{B \notin \{ B_1, \dots, B_{i+1} \}} N^{i+1}B$. Therefore, there exists some $C \in \mathcal{B} \setminus \{ B_1, \dots, B_{i+1} \}$, such that $x \in N^{i+1}C$. Hence, $x \in \bigcap_{j=1}^{i+1} N^i B_j \cap N^{i+1}B_{i+2}$ where we denote $B_{i+2} := C$. This implies that either, $x$ is contained in an element of $\mathcal{B}_{j}$ for $j \leq i+1$, or we find mutually distinct elements $B_1, \dots B_{i+2} \in \mathcal{B}$ such that $x \in \bigcap_{j=1}^{i+1} N^iB_j \cap N^{i+1}B_{i+2}$. This proves the claim.
\end{proof}

Let $x \in Z$ and suppose $x$ is not contained in $\mathcal{B}_{j}$ for all $1 \leq j \leq n+1$. By the claim, we find distinct elements $B_1, \dots B_{n+2} \in \mathcal{B}$ such that $x \in \bigcap_{j=1}^{n+2} N^{n+1}B_j$. Since $N^{n+1}Ê\mathcal{B}$ has $K^{-2}s$-multiplicity $\leq n+1$, this cannot happen. Therefore, $x$ has to be contained in $\mathcal{B}_i$ for some $i \leq n+1$. This implies that $\bigcup_{i=1}^{n+1} \mathcal{B}_i$ is a covering of $Z$.\\

We are left to show that $\mathcal{B}_i$ is $cK^{4n}s$-bounded and has $s$-multiplicity $\leq 1$. Let $A \in \mathcal{B}_i$. Then $A \subset N^{i-1}B$ for some $B \in \mathcal{B}$ and therefore, diam$(A) \leq cK^{2n+2(i-1)}s \leq cK^{4n}s$ for all $i \leq n+1$.

For the $s$-multiplicity, consider a subset $U$ with diam$(U) \leq s$. Suppose, $A, \overline{A} \in \mathcal{B}_i$ such that $U$ intersects both $A$ and $\overline{A}$. Then we find $a \in U \cap A$ and $\overline{a} \in U \cap \overline{A}$. Note that $\rho(a, \overline{a}) \leq s$. By construction, $A$ and $\overline{A}$ have the form $\bigcap_{j=1}^i N^{i-1}B_j \setminus \bigcup_{B \notin \{ B_1, \dots, B_i \}} N^i B$ and $\bigcap_{j=1}^i N^{i-1}\overline{B_j} \setminus \bigcup_{B \notin \{ \overline{B_1}, \dots, \overline{B_i} \}} N^i B$ respectively. We want to show that $\{ B_1, \dots B_i \} = \{ \overline{B_1}, \dots, \overline{B_i} \}$ and thus $A = \overline{A}$. Suppose this is not true. Then we find some $l$ such that $\overline{B_l} \notin \{ B_1, \dots B_i \}$. Therefore, $a \notin N^i\overline{B_l}$. However, since $\overline{a} \in N^{i-1}\overline{B_l}$ and $\rho(a, \overline{a}) \leq s$, we obtain $a \in N^i\overline{B_l}$, a contradiction. Therefore, $\{ B_1, \dots, B_i \} = \{ \overline{B_1}, \dots, \overline{B_i} \}$ and $A = \overline{A}$. This implies that $\mathcal{B}_i$ has $s$-multiplicity $\leq 1$ for all $1 \leq i \leq n+1$ which completes the proof of Proposition \ref{prop:NagataDimensionCharacterization}.
\end{proof}

\begin{proof}[Proof of proposition \ref{prop:LangSchlichenmaierCovering}]

By proposition \ref{prop:NagataDimensionCharacterization} we can find $cK^4r^j$-bounded coverings $\mathcal{B}^j = \bigcup_{k=0}^n B_k^j,$ $j \in \mathbb{Z}$ such that $(i)$ holds for each of the coverings, where $c$ is the constant $c_2$ from Proposition \ref{prop:NagataDimensionCharacterization} and $r^j$ in $(i)$ is replaced by $K^4r^j$. Assume without loss of generality $c \geq 1$. Fix a basepoint $z \in Z$. By permuting the $B_k^j$ for fixed $j$ we can assume without loss of generality that for all $m \in \mathbb{Z}$ and $k \in \{ 0, \dots, n \}$ there is a $C \in \mathcal{B}_k^{m(n+1) + k}$ that contains $z$. Replacing each element $C \in \mathcal{B}_k^j$ by $\bigcup_{x \in C} B_{r^j}(x)$ then yields $(ii)$ and $(iii)$. Note that each $\mathcal{B}_k^j$ is now $K^2 \max(cK^4r^j, r^j)$-bounded and has $K^2r^j$-multiplicity $\leq 1$.

We now further modify the $\mathcal{B}_k^j$ to also get property $(iv)$. We write $C \succ B$ if $B \in \mathcal{B}_k^i$, $C \in \mathcal{B}_k^j$, $i < j$ such that there is a pair of points $x \in B, y \in C$ with $\rho(x,y) \leq K^2r^i$. For $C \in \mathcal{B}_k^j$, we denote by $\hat{C}$ the union of $C$ with all $B$'s for which there exists a chain
\[ C \succ C_1 \succ \dots \succ C_m = B, \]
where $m \geq 1$ and $C_l \in \mathcal{B}_k^{j_l}$ for $l \in \{ 1, \dots, m \}$.

\begin{claim} \label{claim:Radius}
Choose $r > K^2$. For all $B$ that appear in the union of $\hat{C}$, there exists some $y \in C$, such that $B \subset B_{R(j_1)}(y)$ where
\[ R(j_1) := cK^8r^{j_1}. \]
\end{claim}

\begin{proof}[Proof of Claim \ref{claim:Radius}]

We use induction over the length $m$ of the chain. If $m = 1$, then clearly $B \subset B_{cK^8r^i}(y)$ for some $y \in C$ and $j_1 = i$. We now do the induction step from $m$ to $m+1$. Given a chain $C \succ C_1 \succ \dots \succ C_{m+1} = B$, we find $x_l, y_l \in C_l, l \in \{ 1, \dots, m+1 \}$ and $y_0 \in C$ such that for all $l \in \{ 1, \dots, m+1 \}$, $\rho(y_{l-1}, x_l) \leq K^2r^{j_l}$ and $\rho(x_l, y_l) \leq cK^6r^{j_l}$. We can then estimate
\begin{equation*}
\begin{split}
\rho(y_0, y_m) & \leq K^2 \max(\rho(y_0,x_1), \rho(x_1, y_1), \rho(y_1, y_m))\\
& \leq K^2 \max(K^2r^{j_1}, cK^6r^{j_1}, cK^8r^{j_2})\\
& \leq K^2 \max(cK^6r^{j_1}, cK^8r^{j_1 - 1})\\
& \leq K^2 \max(cK^6r^{j_1}, cK^6r^{j_1})\\
& \leq cK^8r^{j_1},
\end{split}
\end{equation*}
where we used the induction assumption in the second inequality. Thus, $B \subset B_{cK^8r^{j_1}}(y_0)$ which proves the claim.
\end{proof}

Now choose $r > cK^8$, which implies that $r > K^2$ and $R(j_1) \leq r^{j_1 + 1} \leq r^j$. We compute that diam$(\hat{C}) \leq K^2 \max(R(j_1), cK^6r^j) \leq K^2 \max(r^j, cK^6r^j) = cK^8r^j$ and thus, for all $k$ and $j$ the family of all $\hat{C}$ with $C \in \mathcal{B}_k^j$ is a $cK^8r^j$-bounded family with $r^j$-multiplicity $\leq 1$. (For the multiplicity, note that $\hat{C}$ is contained in the $r^j$-neighbourhood of $C$ and use the same argument as earlier.) Denote these families by $\hat{\mathcal{B}}_k^j$. Note that the $\hat{\mathcal{B}}_k^j$ still satisfy $(ii)$ and $(iii)$.

We now show that the $\hat{\mathcal{B}}_k^j$ also satisfy $(iv)$. Let $B \in \mathcal{B}_k^i, C \in \mathcal{B}_k^j$ for some $k$ and $i < j$ and suppose there are points $x \in \hat{B}, y \in \hat{C}$ such that $\rho(x,y) \leq r^i$. Since $B \in \mathcal{B}_k^i$, we have some $x' \in B$ with $\rho(x, x') \leq r^i$ by Claim \ref{claim:Radius}. By definition of $\hat{C}$ we find a chain $C \succ C_1 \succ \dots \succ C_m$ for some $m \geq 0$ and $C_l \in \mathcal{B}_k^{j_l}$ for all $l \in \{ 1, \dots, m \}$, such that $y \in C_m$. Let $\tilde{l}$ be the largest index with $j_{\tilde{l}} \geq i$, possibly $\tilde{l} = 0$, $j_0 = j$. Since $j > i$ by assumption, such an index exists. Then there is a point $y' \in C_{\tilde{l}}$ such that $\rho(y,y') \leq r^i$, as $j_{\tilde{l}+1} \leq i$. Therefore, $\rho(x',y') \leq K^2 r^i$. Since $\mathcal{B}_k^i$ has $K^2r^i$-multiplicity $\leq 1$, we get that $i < j_{\tilde{l}}$ and $C_{\tilde{l}} \succ B$. Therefore, $\hat{B} \subset \hat{C}$. This implies that the families $\hat{\mathcal{B}}_k^j$ satisfy property $(iv)$ and form coverings of $Z$ with the properties required in Proposition \ref{prop:LangSchlichenmaierCovering}.
\end{proof}

\begin{proof}[Proof of proposition \ref{prop:NagataInvolution}]
The proof of Proposition \ref{prop:NagataInvolution} is a generalization of the proof in \cite{XieXiangdong}. As discussed earlier, we only need to prove that $\dim_N(Z,\rho) \geq \dim_N(Z,\rho_o)$. Moreover, we can assume $n := \dim_N(Z,\rho) < \infty$. We start by noticing that $\dim_N(Z,\rho) = \dim_N(Z \diagdown \{ \infty, o \}, \rho)$. Let $s > 0$ and choose $K \geq 1$ such that $\rho$ and $\rho_o$ are both $K$-quasi-metrics. By Proposition \ref{prop:LangSchlichenmaierCovering}, we find $c \geq 1$, $r > 1$ and a sequence of coverings $\mathcal{B}^j$ of $(Z \diagdown \{ \infty, o \}, \rho)$, $j \in \mathbb{Z}$ with properties (i)-(iv) of Proposition \ref{prop:LangSchlichenmaierCovering}. Put $c'' := K^4c'$, $c' := 2K^3 \tilde{c}^2$, $\tilde{c} := 10crK^4$. 
We will construct a family $\bigcup_{k=0}^n \mathcal{E}_k$ that is $c''s$-bounded with respect to $\rho_o$, covers $Z \diagdown \{ \infty, o \}$ and each $\mathcal{E}_k$ has $s$-multiplicity $\leq 1$. Since adding the two points $\infty, o$ doesn't change the Nagata-dimension, this implies that $\dim_N(Z, \rho_o) \leq n$.\\

Let $a := \inf \{ \rho(x,o) \vert x \in Z \diagdown \{Ê\infty, o \} \}$ and $b :=\sup \{ \rho(x,o) \vert x \in Z \diagdown \{ \infty, o \} \}$. If $0 < a$ and $b < \infty$, then $id : (Z \diagdown \{ \infty, o \}, \rho) \rightarrow (Z \diagdown \{Ê\infty, o \}, \rho_o)$ is bi-Lipschitz and hence preserves the Nagata-dimension. This can be seen by considering the two inequalities
\begin{equation*}
\begin{split}
& \rho_o(x,y) = \frac{\rho(x,y)}{\rho(x,o)\rho(o,y)} \leq \frac{\rho(x,y)}{a^2},\\
& \rho(x,y) = \rho_o(x,y) \rho(x,o) \rho(o,y) \leq \rho_o(x,y) b^2.
\end{split}
\end{equation*}

Thus, we assume from now on that either $a = 0$ or $b = \infty$. Suppose $a > 0$ and therefore, $b = \infty$. Then
\[ \rho_o(x,y) = \frac{\rho(x,y)}{\rho(x,o)\rho(o,y)} \leq \frac{K}{\min(\rho(x,o),\rho(o,y))} \leq \frac{K}{a} \]
for all $x,y \in Z \diagdown \{ \infty, o \}$. Thus, $(Z \diagdown \{ \infty, o \}, \rho_o)$ is bounded by $\frac{K}{a}$. If $c's \geq \frac{K}{a}$, then we can cover $Z \diagdown \{ \infty, o \}$ by one $c's$-bounded set which we then choose as our covering.

Now suppose that $0 < s < \frac{K}{ac'}$ and $a \geq 0$. Consider the set
\[ A_s := \{ x \in Z \vert 0 < \rho(x,o) \leq \frac{2K}{sc'} \}. \]
Since $a < \frac{K}{sc'}$, $A_s$ is non-empty. Note that the complement of $A_s$ is $c's$-bounded with respect to $\rho_o$, since for all $x,y \in Z \diagdown A_s$
\[ \rho_o(x,y) \leq \frac{K}{\min(\rho(x,o),\rho(o,y))} \leq \frac{Kc's}{2K} < c's. \]

Let $x \in A_s$. By property (ii), for each $j \in \mathbb{Z}$ there exists a $C_x^j \in \mathcal{B}^j$ such that $B_{\rho,r^j}(x) \subset C_x^j$. Since $\mathcal{B}^j$ is $cr^j$-bounded, diam$_\rho(C_x^j) \leq cr^j$. We analyze the diameter of $C_x^j$ with respect to $\rho_o$. For any $y,z \in C_x^j$, we have
\[ \rho_o(y,z) = \frac{\rho(y,z)}{\rho(y,o)\rho(o,z)} \leq \frac{cr^j}{\rho(y,o)\rho(o,z)}. \]

We now show that $\diam_{\rho_o}(C_x^j) \rightarrow 0$ as $j \rightarrow - \infty$. For this, we estimate $\rho(y,o)$ from below for all $y \in C_x^j$. We know that
\[ \rho(x,o) \leq K \max(\rho(x,y),\rho(y,o)) \leq K \max(cr^j, \rho(y,o)). \]
Since $x \neq o$, this inequality takes the form
\[ \rho(x,o) \leq K \rho(y,o) \]
for $j$ sufficiently small. Therefore, we see that
\[ \diam_{\rho_o}(C_x^j) \leq \frac{K^2cr^j}{\rho(x,o)^2} \xrightarrow{j \rightarrow -\infty} 0. \]
In particular, this means that for any $x \in A_s$, we find $j$ sufficiently small and a set $C_x^j \in \mathcal{B}^j$ such that $B_{\rho, r^j}(x) \subset C_x^j$ and $\diam_{\rho_o}(C_x^j) \leq \tilde{c}s$.

\begin{lem} \label{lem:lemmaj}
Let $x \in A_s$. Define
\[ j(x) := \sup\{ j \in \mathbb{Z} \vert \text{ there is a } C \in \mathcal{B}^j : B_{\rho,r^j}(x) \subset C \text{ and } \diam_{\rho_o}(C) \leq \tilde{c}s \}. \]
Then $j(x) < \infty$.

\end{lem}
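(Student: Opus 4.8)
The plan is to show that for all sufficiently large $j$, \emph{no} set $C \in \mathcal{B}^j$ can simultaneously contain the ball $B_{d,r^j}(x)$ and satisfy $\diam_{d_o}(C) \leq \tilde{c}s$, which caps the supremum defining $j(x)$. The mechanism is to produce a single \emph{witness point} $y^* \in X \diagdown \{ \infty, o \}$ with $d_o(x, y^*) > \tilde{c}s$. Once $y^*$ is found, I would observe that whenever $r^j \geq d(x, y^*)$ we have $y^* \in B_{d,r^j}(x)$, and since trivially $x \in B_{d,r^j}(x)$, every set $C$ containing $B_{d,r^j}(x)$ contains both $x$ and $y^*$. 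Hence $\diam_{d_o}(C) \geq d_o(x, y^*) > \tilde{c}s$ for each such $C$, so no admissible $C$ exists and $j$ fails the defining condition. Therefore $j(x) \leq \log_r d(x, y^*) < \infty$.

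Everything thus reduces to constructing the witness $y^*$, and here I would split into the two remaining cases of the main case ($a = 0$, or $a > 0$ and hence $b = \infty$). If $a = 0$, I pick a point $y$ with $d(o, y)$ as small as I like. Using $d(x,o) \leq K \max(d(x,y), d(y,o))$ together with the fact that $d(x,o) > 0$ is fixed while $d(o,y) \to 0$, the maximum is eventually realized by $d(x,y)$, giving $d(x,y) \geq d(x,o)/K$ and therefore
\[ d_o(x,y) = \frac{d(x,y)}{d(x,o) d(o,y)} \geq \frac{1}{K\, d(o,y)}. \]
As $d(o,y) \to 0$ this exceeds $\tilde{c}s$, producing $y^*$.

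If instead $a > 0$, then we are necessarily in the case $b = \infty$ (the bi-Lipschitz case $0 < a, b < \infty$ having already been disposed of), so there are points $y$ with $d(o, y)$ arbitrarily large. Choosing $y$ with $d(o, y) > K\, d(x, o)$ forces the maximum in $d(o,y) \leq K\max(d(o,x), d(x,y))$ to be $d(x,y)$, whence $d(x,y) \geq d(o,y)/K$ and
\[ d_o(x,y) = \frac{d(x,y)}{d(x,o) d(o,y)} \geq \frac{1}{K\, d(x,o)}. \]
Now I would invoke the membership $x \in A_s$, which gives $d(x,o) \leq \frac{2K}{sc'}$, so that $d_o(x,y) \geq \frac{sc'}{2K^2} = sK\tilde{c}^2$; since $K \tilde{c} > 1$ this is strictly larger than $\tilde{c}s$, again producing $y^*$.

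The main obstacle I anticipate is this second case. Unlike the situation $a = 0$, where $d_o$ is unbounded near $o$ and a witness is immediate, when $a > 0$ the involuted quasi-metric $d_o$ is globally bounded by $K/a$, so the witness must be extracted quantitatively from a point far from $o$, and one must check that the resulting lower bound $\frac{1}{K d(x,o)}$ genuinely beats $\tilde{c}s$. This is precisely where the definitions $c' = 2K^3 \tilde{c}^2$ and the threshold $d(x,o) \leq \frac{2K}{sc'}$ cutting out $A_s$ enter, and verifying this inequality chain is the one delicate point; the remainder is a direct application of the quasi-metric inequality.
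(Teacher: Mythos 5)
Your proof is correct and follows essentially the same strategy as the paper's: in each of the two remaining cases you exhibit a witness point lying in $B_{d,r^j}(x)$ for all large $j$ whose $d_o$-distance to $x$ exceeds $\tilde{c}s$, which is exactly what the paper does with its sequences $x_i$ (including the same use of $x \in A_s$ and $c' = 2K^3\tilde{c}^2$ in the case $a > 0$). The only difference is in the case $a = 0$, where you derive $d(x,y) \geq d(x,o)/K$ directly from the quasi-metric inequality, whereas the paper instead invokes continuity of $d$ with respect to the M\"obius topology to conclude $d(x,x_i) \to d(x,o) > 0$; your variant is slightly more elementary and avoids that external appeal.
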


The map $j(x)$ points us to the largest $C_x^j$ that such that the family $\{ÊC_x^{j(x)} \}_{x \in A_s}$ $\tilde{c}s$-bounded. After removing some redundant elements, this family has $s$-multiplicity $\leq n+1$ which is what we want.

\begin{proof}[Proof of Lemma \ref{lem:lemmaj}]

Suppose $a = 0$. Then we can find a sequence $x_i \in Z \diagdown \{ \infty, o \}$ such that $\rho(x_i,o) \rightarrow 0$. Then, for all $j \in \mathbb{Z}$ with $r^j > \rho(x,o)$, we have that $x_i \in B_{\rho,r^j}(x)$ for sufficiently large $i$. Now we see that
\[ \rho_o(x, x_i) = \frac{\rho(x,x_i)}{\rho(x,o)\rho(o,x_i)} \xrightarrow{i \rightarrow \infty} \infty, \]
since $\rho(x, x_i) = \max( \rho(x, x_i), \rho(x_i, o)) \geq \frac{ \rho(x, o) }{K}$ for $i$ sufficiently large. Thus, $\diam_{\rho_o}(C_x^j) \geq \diam_{\rho_o}(B_{\rho,r^j}(x)) = \infty$ whenever $r^j > \rho(x,o)$ and hence $r^{j(x)} \leq \rho(x,o) < \infty$.\\

Now suppose $a > 0$ and, consequently, $b = \infty$. We find a sequence $x_i \in Z \diagdown \{ \infty, o \}$ such that $\rho(x_i, o) \rightarrow \infty$. We choose and fix $I \in \mathbb{N}$ sufficiently large such that
\[ \frac{\rho(x, x_I)}{\rho(o, x_I)} \geq \frac{\rho(x,x_I)}{K \max(\rho(x, x_I), \rho(x,o))} = \frac{1}{K}. \]
For every $j \in \mathbb{Z}$ such that $r^j > \rho(x, x_I)$, we have $x_I \in B_{\rho,r^j}(x)$ and thus,
\begin{equation*}
\begin{split}
\diam_{\rho_o}(C_x^j) & \geq \diam_{\rho_o}(B_{\rho,r^j}(x)) \geq \rho_o(x, x_I)\\
& = \frac{\rho(x,x_I)}{\rho(x,o)\rho(o,x_I)} \geq \frac{1}{K\rho(x,o)} \geq \frac{1}{K}\frac{c's}{2K} > \tilde{c}s,
\end{split}
\end{equation*}
where we use that $x \in A_s$ for the second-to-last inequality. This implies that $r^{j(x)} \leq \rho(x, x_I) < \infty$. We conclude that, in both cases, $j(x) < \infty$.
\end{proof}

We now define a subfamily of $\mathcal{B}$ by $\mathcal{C} := \{ C_x^{j(x)} \vert x \in A_sÊ\}$. Note that we can write $\mathcal{C} = \bigcup_{k=0}^n \mathcal{C}_k$ where $\mathcal{C}_k := \mathcal{C} \cap \mathcal{B}_k$. Each family $\mathcal{C}_k$ may contain too many sets to have $s$-multiplicity $\leq 1$. The next lemma allows us to throw away those elements of $\mathcal{C}$ that are not needed.

\begin{lem} \label{lem:maximalelement}

For every $C \in \mathcal{C}$ there exists a maximal element $\overline{C}$ of $\mathcal{C}$ such that $C \subset \overline{C}$.

\end{lem}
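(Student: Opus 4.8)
The plan is to exploit the laminar structure encoded in property (iv) of Proposition \ref{prop:LangSchlichenmaierCovering}, together with a uniform upper bound on the levels $j(x)$, to reduce the existence of a maximal element to a maximum taken over finitely many nested chains.

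First I would record that the levels are bounded above, which is a uniform version of Lemma \ref{lem:lemmaj}: I claim there is an integer $J$ with $j(x) \le J$ for every $x \in A_s$. Every $x \in A_s$ satisfies $d(x,o) \le \frac{2K}{sc'}$, so in the case $a = 0$ the bound $r^{j(x)} \le d(x,o)$ obtained there yields $r^{j(x)} \le \frac{2K}{sc'}$ uniformly; in the case $a > 0$, $b = \infty$, fixing a \emph{single} point $x_I$ from the sequence with $d(o,x_I) \ge K \cdot \frac{2K}{sc'}$ makes the estimate $r^{j(x)} \le d(x,x_I) \le K\,d(o,x_I)$ valid for all $x \in A_s$ at once. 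Either way $r^{j(x)}$ is bounded, hence $j(x) \le J$ for some $J \in \mathbb{Z}$.

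Next, fix $C \in \mathcal{C}$ and a point $x_0 \in C$, and set $\mathcal{C}_C := \{ D \in \mathcal{C} \mid C \subseteq D \}$. For each $k \in \{0,\dots,n\}$ I would show that the sets in $\mathcal{C}_k \cap \mathcal{C}_C$ are totally ordered by inclusion. Two such sets both contain $x_0$; since every $\mathcal{B}^j_k$ has $r^j$-multiplicity $\le 1$ and the singleton $\{x_0\}$ has diameter $0$, no two distinct sets of the same level can both contain $x_0$, so the two sets lie at distinct levels $i < j'$. Property (iv) then forces the set at level $i$ to be contained in the set at level $j'$, because the separation alternative ($d(x,y) > r^i$ for all $x,y$) fails as $x_0$ lies in both. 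Since the levels occurring in $\mathcal{C}_k \cap \mathcal{C}_C$ are distinct integers bounded above by $J$, this chain has a top element $M_k$, namely the one of largest level, and $M_k$ contains every other member of $\mathcal{C}_k \cap \mathcal{C}_C$.

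Finally, among the finitely many sets $M_0,\dots,M_n$ that exist I would choose one, $\overline{C}$, that is maximal with respect to inclusion, and verify that it is maximal in all of $\mathcal{C}$: if $\overline{C} \subsetneq E$ for some $E \in \mathcal{C}$, then $C \subseteq \overline{C} \subsetneq E$ places $E$ in $\mathcal{C}_{k_E} \cap \mathcal{C}_C$ for its family index $k_E$, whence $E \subseteq M_{k_E}$ and thus $\overline{C} \subsetneq M_{k_E}$, contradicting the choice of $\overline{C}$. As $C \subseteq \overline{C}$ holds by construction, this proves the lemma. The main obstacle is the first step, namely making the bound on $j(x)$ uniform over $A_s$ rather than merely pointwise as in Lemma \ref{lem:lemmaj}; the remainder is a bookkeeping argument about the laminar families $\mathcal{C}_k$.
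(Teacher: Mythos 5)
Your proof is correct, but it takes a genuinely different route from the paper's. The paper reduces the lemma to the ascending chain condition and argues by contradiction: an infinite strictly increasing sequence in $\mathcal{C}$ can be assumed to lie in a single $\mathcal{C}_k$, its levels $j(x_i)$ are pairwise distinct by the $r^j$-multiplicity $\leq 1$ of $\mathcal{B}_k^j$, so along a further subsequence they tend to $-\infty$ (impossible, since a strictly increasing chain cannot be trapped in arbitrarily small balls) or to $+\infty$, in which case the union of the chain is all of $X \setminus \{\infty, o\}$ and hence $\diam_{d_o}(X \setminus \{\infty, o\}) \leq \tilde{c}s$; this is then refuted by an explicit two-point estimate showing the $d_o$-diameter exceeds $\tilde{c}s$. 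You instead upgrade Lemma \ref{lem:lemmaj} to a bound on $j(x)$ that is uniform over $A_s$ --- which does work: for $a = 0$ one has $r^{j(x)} \leq d(x,o) \leq \frac{2K}{sc'}$, and for $a > 0$, $b = \infty$ a single sufficiently distant $x_I$ serves every $x \in A_s$ (take $d(o,x_I)$ strictly greater than $\frac{2K^2}{sc'}$ to avoid the equality edge case in your $\geq$) --- and then exploit the laminar structure directly: the supersets of a fixed $C$ inside each $\mathcal{C}_k$ all contain a common point, hence by the multiplicity bound they sit at distinct levels and by property (iv) of Proposition \ref{prop:LangSchlichenmaierCovering} they form a nested chain; since the levels are bounded above, each such chain has a top element, and a maximal one among these at most $n+1$ top elements is maximal in all of $\mathcal{C}$. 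Your version is constructive (it exhibits $\overline{C}$ explicitly), avoids both the contradiction and the closing diameter computation, and sidesteps the implicit use of dependent choice in passing from the ascending chain condition to the existence of maximal elements; the paper's version avoids having to make the level bound uniform. Both arguments rest on the same structural inputs, namely properties (i) and (iv) of Proposition \ref{prop:LangSchlichenmaierCovering} and the fact that $x \in A_s$ forces $d(x,o) \leq \frac{2K}{sc'}$.
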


\begin{proof}

It is enough to show that there is no infinite strictly increasing sequence in $\mathcal{C}$. Suppose we have a sequence $( C_i)_i = ( C_{x_i}^{j(x_i)} )_i$ such that $C_{x_i}^{j(x_i)} \subsetneq C_{x_{i+1}}^{j(x_{i+1})}$. Without loss of generality, we can assume that there is a $k \in \{ 0, \dots, n \}$ such that $C_i \in \mathcal{C}_k$ for all $i$ by passing to a subsequence if necessary. Since $\mathcal{B}_k^j$ has $r^j$-multiplicity $\leq 1$, we have $j(x_i) \neq j(x_{i'})$ for all $i \neq i'$. Therefore, we can assume that $j(x_i) \rightarrow \pm \infty$ by again passing to a subsequence if necessary. If $j(x_i) \rightarrow -\infty$, this would imply that the strictly increasing sequence $C_i$ is contained in a ball of radius $\epsilon > 0$ for arbitrarily small $\epsilon$. This implies that all $C_i$ are contained in a single point which cannot be as the sequence is strictly increasing. Thus, $j(x_i) \rightarrow \infty$ and, since $x_i \in A_s$ for all $i$, $A_s$ is bounded, and $C_i \supset B_{\rho, r^{j(x_i)}}(x_i)$, we conclude that $\bigcup_{i=1}^{\infty} C_i = Z \diagdown \{ \infty , o \}$.

On the other hand, $\diam_{\rho_o}(C_i) \leq \tilde{c}s$ for all $i$ and therefore, $\diam_{\rho_o}(Z \diagdown \{ \infty, oÊ\}) \leq \tilde{c}s$. This implies that $a > 0$ and $b = \infty$, since $(Z, \rho_o)$ is unbounded if $a = 0$. Choose a sequence $(y_j)_j$ in $Z \diagdown \{ \infty, o \}$ such that $\rho(y_j, o) \rightarrow \infty$ and, therefore, $\rho(y_j, x) \rightarrow \infty$ for all $x \in A_s$. Then we have
\begin{equation*}
\begin{split}
\diam_{\rho_o}(Z \diagdown \{ \infty, o \}) & \geq \rho_o(x, y_j)\\
& = \frac{\rho(x, y_j)}{\rho(x,o)\rho(o,y_j)}\\
& \geq \frac{\rho(x, y_j)}{\rho(x,o) K \max(\rho(o, x), \rho(x, y_j))}\\
& \xrightarrow{j \rightarrow \infty} \frac{1}{K\rho(x,o)} \geq \frac{c's}{2K^2} > \tilde{c}s.
\end{split}
\end{equation*}
This implies $\tilde{c}s < \diam_{\rho_o}(Z \diagdown \{ \infty, o \}) \leq \tilde{c}s$ which is a contradiction.
\end{proof}

Define by $\mathcal{D}$ the subfamily of $\mathcal{C}$ consisting of the maximal elements of $\mathcal{C}$ with respect to inclusion. By Lemma \ref{lem:maximalelement}, this is still a covering of $A_s$. Furthermore, define $\mathcal{D}_k := \mathcal{D} \cap \mathcal{B}_k$.\\

We claim that for every $k \in \{ 0, \dots, nÊ\}$, $\mathcal{D}_k$ has $s$-multiplicity $\leq 1$ with respect to $\rho_o$. We prove this by proving the following claim.

\begin{claim} \label{claim:multiplicity}
Let $C_x^{j(x)},$ $C_y^{j(y)} \in \mathcal{D}_k$. Then either $C_x^{j(x)} = C_y^{j(y)}$, or $\rho_o(x', y') > s$ for all $x' \in C_x^{j(x)}$, $y' \in C_y^{j(y)}$.
\end{claim}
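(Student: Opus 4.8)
The plan is to prove the stated dichotomy in contrapositive form: assuming $C_x^{j(x)} \neq C_y^{j(y)}$, I will show that $d_o(x',y') > s$ for all $x' \in C_x^{j(x)}$ and $y' \in C_y^{j(y)}$. Write $j_0 := \min(j(x),j(y))$ and assume without loss of generality that $j_0 = j(x)$. The argument splits into two essentially independent parts: first I establish a \emph{separation at scale $r^{j_0}$ with respect to $d$}, and then I convert this into the desired separation with respect to $d_o$, using that the centres $x,y$ lie in $A_s$ and the diameter bounds $\diam_{d_o}(C_x^{j(x)}), \diam_{d_o}(C_y^{j(y)}) \le \tilde{c}s$ coming from the definition of $j(\cdot)$ in Lemma \ref{lem:lemmaj}.

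For the $d$-separation I distinguish whether the two sets sit at the same level of the covering hierarchy. If $j(x) = j(y)$, then both sets lie in $\mathcal{B}^{j_0}_k$, which has $r^{j_0}$-multiplicity $\le 1$ by property (i) of Proposition \ref{prop:LangSchlichenmaierCovering}; hence the two-point set $\{x',y'\}$, were it of $d$-diameter $\le r^{j_0}$, would meet two distinct members of $\mathcal{B}^{j_0}_k$, a contradiction, so $d(x',y') > r^{j_0}$. If $j(x) < j(y)$, I apply property (iv) of Proposition \ref{prop:LangSchlichenmaierCovering} to $C_x^{j(x)} \in \mathcal{B}^{j(x)}_k$ and $C_y^{j(y)} \in \mathcal{B}^{j(y)}_k$: either $C_x^{j(x)} \subset C_y^{j(y)}$, or $d(x',y') > r^{j(x)} = r^{j_0}$ for all $x',y'$. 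The first alternative is excluded, since $C_y^{j(y)} \in \mathcal{C}$ while $C_x^{j(x)}$ is a maximal element of $\mathcal{C}$ by construction of $\mathcal{D}_k$ (Lemma \ref{lem:maximalelement}), so inclusion would force equality, contrary to assumption. In both cases we conclude $d(x',y') > r^{j_0}$.

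For the conversion I need two estimates. First, \emph{comparability of distances to $o$}: for every $x' \in C_x^{j(x)}$ I claim $d(x',o) \le K\, d(x,o)$, and symmetrically $d(o,y') \le K\, d(y,o)$. This follows from $d(x',o) \le K\max(d(x',x), d(x,o))$ once the alternative $d(x',o) \le K\, d(x,x')$ is ruled out: combined with $d_o(x,x') \le \tilde{c}s$, that is $d(x,x') \le \tilde{c}s\, d(x,o)d(o,x')$, it would force $d(x,o) \ge \tfrac{1}{K\tilde{c}s}$, contradicting $x \in A_s$, i.e. $d(x,o) \le \tfrac{2K}{c's} = \tfrac{1}{K^2\tilde{c}^2 s}$, by the choice $c' = 2K^3\tilde{c}^2$. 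Second, a \emph{lower bound on the scale}: using that $j(x)$ is maximal with $\diam_{d_o}(C_x^{j(x)}) \le \tilde{c}s$, together with the explicit estimate $\diam_{d_o}(C_x^j) \le \tfrac{cK^2 r^j}{d(x,o)^2}$ valid whenever $r^j \le \tfrac{d(x,o)}{Kc}$ (obtained exactly as in the paragraph preceding Lemma \ref{lem:lemmaj}, by bounding $d(\cdot,o)$ from below on $C_x^j$), one gets a bound of the form $r^{j(x)} \gtrsim \tfrac{\tilde{c}s\, d(x,o)^2}{cK^2 r}$. Substituting $d(x',y') > r^{j_0}$ and $d(x',o)d(o,y') \le K^2 d(x,o)d(y,o)$ into $d_o(x',y') = d(x',y')/\big(d(x',o)d(o,y')\big)$, and using $d(y,o) \le \tfrac{2K}{c's}$, is engineered to yield $d_o(x',y') > s$ once the definitions $\tilde{c} = 10crK^3$, $c' = 2K^3\tilde{c}^2$, $c'' = K^4 c'$ are inserted.

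The hard part is the conversion, and specifically keeping the scale bookkeeping uniform in the regime where a centre, say $x$, is very close to $o$: there $r^{j(x)}$ is forced to be extremely small, so the crude bound $d(x',y') > r^{j_0}$ becomes too weak and must be supplemented by the geometric fact that $o$ is the point at infinity for $d_o$ (Definition \ref{def:Involution}), so that $x'$ near $o$ makes $d_o(x',\cdot)$ large directly through the quasi-triangle inequality $d(o,y') \le K\max(d(o,x'),d(x',y'))$. The whole estimate rests on the constants $\tilde{c}, c', c''$ being chosen so that each of these competing scales is dominated with room to spare; checking this is the one genuinely delicate computation, and I expect it to be where all the case distinctions and the explicit values of $\tilde{c}, c', c''$ are consumed.
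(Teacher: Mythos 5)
Your skeleton agrees with the paper's up to and including the two preparatory steps: the $d$-separation $d(x',y')>r^{j(x)}$ for distinct maximal elements (your explicit treatment of the case $j(x)=j(y)$ via the $r^{j_0}$-multiplicity of $\mathcal{B}^{j_0}_k$ is in fact more careful than the paper, which invokes property (iv) without remarking that it requires $i<j$ strictly), and the comparison $d(x',o)\le K\,d(x,o)$, $d(y',o)\le K\,d(y,o)$, which you derive exactly as the paper does. The endgame, however, is genuinely different. The paper argues by contradiction: assuming $d_o(x',y')\le s$, it uses $r^{j(x)+1}\le r\,d(x',y')=r\,d_o(x',y')\,d(x',o)\,d(o,y')\le rsK^2d(x,o)^2$ to show that the set $C'\in\mathcal{B}^{j(x)+1}$ containing $B_{d,r^{j(x)+1}}(x)$ satisfies $\diam_{d_o}(C')\le\tilde{c}s$, contradicting the maximality in the definition of $j(x)$; every distance there is measured from the single centre $x$. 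You instead extract from that same maximality a quantitative lower bound $r^{j(x)}>\tilde{c}s\,d(x,o)^2/(cK^2r)$ and try to push it directly through $d_o(x',y')=d(x',y')/\bigl(d(x',o)\,d(o,y')\bigr)$.

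The gap is that the final computation you defer does not close with the constants fixed by the ambient proof. Carry it out: the dichotomy $d(o,y')\le K\max\bigl(d(o,x'),d(x',y')\bigr)$ gives either $d_o(x',y')\ge 1/\bigl(Kd(x',o)\bigr)\ge c's/(2K^3)=\tilde{c}^2s$, which is fine, or $d(o,y')\le K\,d(o,x')\le K^2d(x,o)$, in which case the best available bound is $d_o(x',y')>r^{j(x)}/\bigl(K^3d(x,o)^2\bigr)>\tilde{c}s/(cK^5r)=(10/K^2)\,s$, using $\tilde{c}=10crK^3$. This exceeds $s$ only when $K<\sqrt{10}$; the alternative estimate via $d(o,y')\le K\,d(y,o)$ fares no better, since in this sub-case one only gets $d(y,o)\le K^3d(x,o)$ and hence $d_o(x',y')>(10/K^4)s$. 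The source of the loss is that your route must compare both $d(x',o)$ and $d(o,y')$ to $d(x,o)$, costing $K^3$ against the $cK^2r$ already sitting in the lower bound for $r^{j(x)}$, whereas the paper's contradiction argument only ever compares points of $C'$ to the centre $x$. Your approach is salvageable -- replace $\tilde{c}=10crK^3$ by $10crK^5$ and adjust $c'$, $c''$ accordingly -- but since the claim must slot into the proof of Proposition \ref{prop:NagataInvolution} with its constants already declared, the step you describe as ``engineered to yield $d_o(x',y')>s$'' is exactly the step that fails as stated, and it is the one you did not check.
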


\begin{proof}[Proof of Claim \ref{claim:multiplicity}]

Suppose $C_x^{j(x)} \neq C_y^{j(y)}$ and suppose, by contradiction, there are $x' \in C_x^{j(x)}$, $y' \in C_y^{j(y)}$ such that $\rho_o(x',y') \leq s$. Without loss of generality, assume $j(x) \leq j(y)$. Since $C_x^{j(x)}$ and $C_y^{j(y)}$ are distinct and maximal elements of $\mathcal{C} \subset \mathcal{B}$, property (iv) of Proposition \ref{prop:LangSchlichenmaierCovering} implies that $\rho(\tilde{x}, \tilde{y}) \geq r^{j(x)}$ for all $\tilde{x} \in C_x^{j(x)}$, $\tilde{y} \in C_y^{j(y)}$. In particular, $\rho(x',y') \geq r^{j(x)}$.

By definition of $j(x)$, we know that $\diam_{\rho_o}(C_x^{j(x)}) \leq \tilde{c}s$. Let $z \in C_x^{j(x)} \cup \{ y'Ê\}$. Recall that $K$ was chosen such that $\rho_o$ is a $K$-quasi-metric as well. We have
\[ \rho_o(x, z) \leq K\max(\rho_o(x,x'),\rho_o(x',z)) \leq K \max(\tilde{c}s, s) = K \tilde{c}s,Ê\]
since $\tilde{c} = 10crK^4 > 1$. On the other hand, 
\[ \rho_o(x,z) = \frac{\rho(x,z)}{\rho(x,o)\rho(o,z)}. \]
Using this, the definitions of $c'$ and $\tilde{c}$ and the fact that $x \in A_s$, we get
\[ \rho(x,z) \leq K \tilde{c} s \frac{2K}{sc'} \rho(o,z) = \frac{1}{K \tilde{c}} \rho(o,z). \]
This implies that
\begin{equation*}
\begin{split}
\rho(z,o) & \leq K \max(\rho(z,x), \rho(x,o))\\
& = K \max\left( \frac{1}{K \tilde{c}} \rho(o,z) , \rho(x,o) \right)\\
& = K\rho(x,o).
\end{split}
\end{equation*}

By putting $z = x'$ and $z = y'$ respectively, we conclude that $\rho(x',o) \leq K\rho(x,o)$ and $\rho(y',o) \leq K\rho(x,o)$. Since $\mathcal{B}^j$ satisfies property (ii), there is a $C' \in \mathcal{B}^{j(x)+1}$ such that $B_{\rho, r^{j(x)+1}}(x) \subset C'$. Let $w \in C'$. Then, since $\mathcal{B}^j$ is $cr^j$-bounded,
\begin{equation*}
\begin{split}
\rho(x, w) & \leq cr^{j(x) + 1} \leq cr \rho(x',y')\\
& \leq cr K \max( \rho(x',x), \rho(x,y') ) \leq \frac{crK}{K\tilde{c}} \max( \rho(x',o), \rho(o,y') ) \leq \frac{1}{10K} \rho(x,o).
\end{split}
\end{equation*}
Therefore,
\[ \rho(w,o) \leq K\max(\rho(w,x), \rho(x,o)) \leq K\max \left(\frac{1}{10K}\rho(x,o), \rho(x,o) \right) = K\rho(x,o) \]
and
\[ \rho(x,o) \leq K \max(\rho(x,w), \rho(w,o)) \leq K \max \left(\frac{1}{10K}\rho(x,o), \rho(w,o) \right) = K\rho(w,o), \]
as $\rho(x,o) > \frac{1}{10 K}\rho(x,o)$. Together, this implies that $\frac{1}{K}\rho(x,o) \leq \rho(w,o) \leq K\rho(x,o)$. Now we can estimate
\begin{equation*}
\begin{split}
\rho_o(x,w) & = \frac{\rho(x,w)}{\rho(x,o)\rho(o,w)}\\
& \leq \frac{cr^{j(x) + 1}}{\frac{1}{K}\rho(x,o)^2}\\
& \leq K \frac{cr \rho(x',y')}{\rho(x,o)^2}\\
& = K \frac{cr \rho_o(x',y')\rho(x',o)\rho(y',o)}{\rho(x,o)^2}\\
& \leq K \frac{cr s K\rho(x,o)K\rho(x,o)}{\rho(x,o)^2}\\
& = crK^3s,\\
\end{split}
\end{equation*}
where we used that $\rho_o(x',y') \leq s$, $\rho(x',o) \leq K \rho(x,o)$, $\rho(y',o) \leq K\rho(x,o)$. Thus, $\diam_{\rho_o}(C') \leq crK^4s = \tilde{c}s$ and $B_{\rho, r^{j(x) + 1}}(x) \subset C'$ which contradicts the definition of $j(x)$. The claim follows.
\end{proof}

From the claim, we conclude that every $\mathcal{D}_k$ has $s$-multiplicity $\leq 1$. Further, $\bigcup_{k=0}^n \mathcal{D}_k$ is a $\tilde{c}s$-bounded cover of $A_s$. Put $B_s := Z \diagdown (A_s \cup \{ \infty, o \})$. If $B_s = \emptyset$, we are done. If $B_s \neq \emptyset$, we estimate its diameter with respect to $\rho_o$ as follows. For all $x,y \in B_s$, we have $\rho(x,o) > \frac{2K}{c's}$ and thus
\[ \rho_o(x,y) = \frac{\rho(x,y)}{\rho(x,o)\rho(o,y)} \leq \frac{K}{\min(\rho(x,o), \rho(o,y))} \leq K\frac{c's}{2K} < c's. \]
Hence, $\diam_{\rho_o}(B_s) < c's$. Define $\mathcal{D}_0^{far} := \{ÊC \in \mathcal{D}_0 \vert \rho_o(C, B_s) > s \}$ and $\mathcal{D}_0^{close} := \{ C \in \mathcal{D}_0 \vert \rho_o(C, B_s) \leq s \}$ where $\rho_o(A,B) := \inf\{ \rho(a,b) \vert a \in A, b \in B \}$. We now define $E := B_s \cup \bigcup_{C \in \mathcal{D}_0^{close}} C$. The diameter of $E$ is bounded by
\[ \diam_{\rho_o}(E) \leq K^4 \max(s, \tilde{c}s, c's) = K^4c' s = c''s. \]

Defining $\mathcal{E}_0 := \mathcal{D}_0^{far} \cup \{ E \}$ and $\mathcal{E}_k := \mathcal{D}_k$ for $k \in \{Ê1, \dots n \}$, yields a $c''s$-bounded covering of $Z \diagdown \{ \infty, o \}$ with respect to $\rho_o$. Clearly, $\mathcal{E}_0$ still has $s$-multiplicity $\leq 1$ and so does $\mathcal{E}_k$ for every $k \in \{ 1, \dots, n \}$. Thus, we have constructed a $c''s$-bounded covering of $Z \diagdown \{ \infty, o \}$ with $s$-multiplicity $\leq n+1$. This proves that $\dim_N(Z, \rho_o) \leq n = \dim_N(Z,\rho)$ which completes the proof of Proposition \ref{prop:NagataInvolution} and Theorem \ref{thm:NagataInvariance}.
\end{proof}

\begin{rem}
We remark that there are quasi-metric spaces that are not bi-Lipschitz equivalent to a metric space. An example is given in \cite{Schroederdyadic}. For these spaces, the definitions of Hausdorff- and Nagata-dimension provide us with new notions of dimension.
\end{rem}

\bibliographystyle{alpha}
\bibliography{mybib}

\end{document}